\newtheorem{theorem}{Theorem}
\newtheorem{question}[theorem]{Question}
\newtheorem{lemma}[theorem]{Lemma}
\newtheorem{claim}{Claim}
\newcommand{\lfrac}[2]{#1/#2}
\newcommand{\sizeof}[1]{\left\lvert{#1}\right\rvert}
\newenvironment{proofc}{\begin{proof}[Proof of Claim]}{\end{proof}}
\begin{document}

\title{List-edge-colouring planar graphs with precoloured edges}
\author{
Joshua Harrelson\footnote{jth0048@auburn.edu}
\qquad
Jessica McDonald\footnote{mcdonald@auburn.edu; Supported in part by NSF grant DMS-1600551.}
\qquad
Gregory J. Puleo\footnote{gjp0007@auburn.edu}\\
\medskip\\
Department of Mathematics and Statistics\\
Auburn University\\
Auburn, AL 36849
}

\date{}

\maketitle

\begin{abstract} Let $G$ be a simple planar graph of maximum degree
  $\Delta$, let $t$ be a positive integer, and let $L$ be an edge list assignment on $G$
  with $\sizeof{L(e)} \geq \Delta+t$ for all $e \in E(G)$. We prove that if  $H$ is a subgraph of $G$ that
  has been $L$-edge-coloured, then the edge-precolouring can be
  extended to an $L$-edge-colouring of $G$, provided that $H$ has maximum degree $d\leq t$ and either $d \leq t-4$ or $\Delta$ is large enough
  ($\Delta \geq 16+d$ suffices). If $d>t$, there
  are examples for any choice of $\Delta$ where the extension is
  impossible.
\end{abstract}

\section{Introduction}

In this paper all graphs are simple.

An \emph{edge-colouring} of $G$ is an assignment of colours to the edges of $G$ so that adjacent edges receive different colours; if at most $k$ colours are used we say it is a \emph{$k$-edge-colouring}. The \emph{chromatic index} of $G$, denoted $\chi'(G)$, is the minimum $k$ such that $G$ is $k$-edge-colourable. It is obvious that $\chi'(G)\geq \Delta$, where $\Delta:=\Delta(G)$ is the maximum degree of $G$, and Vizing's Theorem \cite{V1} says that $\chi'(G)\leq \Delta+1$.

In this paper we are looking to edge-colour a graph $G$, but with the constraint that some edges have already been coloured and cannot be changed.  In this scenario we have no control over the edge-precolouring -- if the edge-precoloured subgraph is $H$, then it will certainly have at least $\chi'(H)$ colours, but it could have many more, perhaps even more than $\chi'(G)$ colours. If we are looking to extend the edge-precolouring to a $k$-edge-colouring of $G$, then we will certainly need that $k$ is at least the maximum degree of $G$, and that the edge-colouring of $H$ uses at most $k$ colours (i.e. is a $k$-edge-colouring).  In general we consider the following question, first posed by Marcotte and Seymour \cite{MS}:

\begin{question}\label{qu} Given a graph $G$ with maximum degree $\Delta$ and a subgraph $H$ of $G$ that has been $(\Delta+t)$-edge-coloured, can the edge-precolouring of $H$ be extended to a $(\Delta+t)$-edge-colouring of $G$?\end{question}

Marcotte and Seymour's main result in \cite{MS} is a necessary
condition for the answer to Question \ref{qu} to be ``yes''; they
prove that this condition is also sufficient when $G$ is a multiforest
(the condition is rather technical, so we do not state it here).
Question \ref{qu} was shown to be NP-complete by Colbourn \cite{Col},
and Marx \cite{Marx} showed that this is true even when $G$ is a
planar 3-regular bipartite graph. Since, as Holyer~\cite{Ho} showed,
it is NP-complete to decide whether $\chi'(G) = \Delta(G)$ or not, the
special case $t=0$ of Question~\ref{qu} is also NP-complete for
general graphs. In this paper we focus on Question \ref{qu} for planar
graphs. Before saying more about planar graphs in particular however,
let us make several quick observations about Question \ref{qu} in
general.

Firstly, if $t$ is huge -- say at least $\Delta-1$ -- then the answer
is \emph{yes}, and moreover, the extension can be done greedily. This
is because an edge in $G$ sees at most $2(\Delta-1)$ other edges, and
when $t\geq \Delta-1$, this value is at most $\Delta+t-1$. If the
maximum degree of $H$ is $\Delta$ then this threshold for $t$ is
actually sharp. To see this, consider the graph $G$ shown in Figure
\ref{tlessthankminus1}, formed by taking a copy of $K_{1, \Delta}$
with one edge coloured $\Delta$ and the rest uncoloured, and joining
each leaf to $\Delta-1$ distinct new vertices via edges coloured
$1, 2, \ldots, \Delta-1$. Then $G$ has maximum degree $\Delta$, as
does its edge-precoloured subgraph. However, in order to extend the
edge-precolouring to a $(\Delta+t)$-edge-colouring of $G$, we need
$\Delta-1$ new colours, which forces $t \geq \Delta-1$.

\begin{figure}
\centering
\includegraphics[height=4cm]{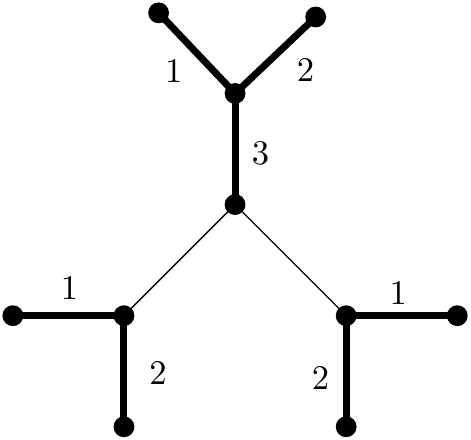}
\caption{A graph $G$ with maximum degree $\Delta=3$ with a precoloured subgraph of maximum degree $\Delta$. In order to extend the edge-precolouring to a $(\Delta+t)$-edge-colouring of $G$ we need $t\geq \Delta-1$.}
\label{tlessthankminus1}
\end{figure}

Given the above paragraph, Question \ref{qu} is only interesting when
the maximum degree of $H$, say $d$, is strictly less than
$\Delta$. Here, we get a natural barrier to extension when $d>t$, via
nearly the same example as above. Let $G$ be the graph shown in Figure
\ref{tlessthank}, formed by taking an (uncoloured) copy of
$K_{1, \Delta}$ and joining each leaf to $d<\Delta$ distinct new
vertices, via edges coloured $1, 2, \ldots, d$. The resulting graph
$G$ has maximum degree $\Delta$, and contains a precoloured subgraph
$H$ with maximum degree $d$. However, in order to extend the
edge-precolouring to $G$, we need $\Delta$ new colours, meaning that
for a $(\Delta+t)$-edge-colouring of $G$, we need $d\leq t$.

\begin{figure}
\centering
\includegraphics[height=4.2cm]{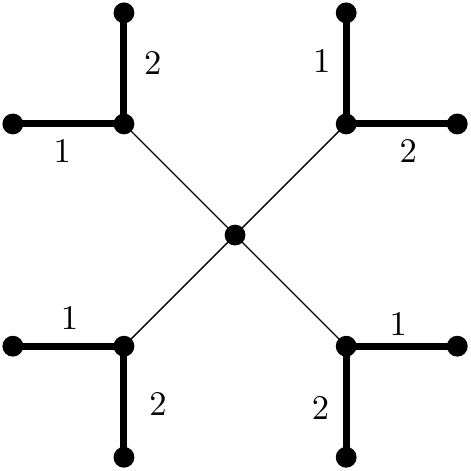}
\caption{A graph $G$ with maximum degree $\Delta=4$ and a precoloured subgraph of maximum degree $d=2$. In order to extend the edge-precolouring to a $(\Delta+t)$-edge-colouring of $G$ we need $t\geq d$.}
\label{tlessthank}
\end{figure}

If it happened that $H$ was edge-coloured efficiently (i.e. using at most $\chi'(H)$ colours), then our problem would be significantly reduced. In this special situation, one could use a completely new set of $\chi'(G-E(H))$ colours to extend to an edge-colouring of $G$ with at most the following number of colours (according to Vizing's Theorem):
\begin{equation}\label{Hefficient}
\chi'(G-E(H))+\chi'(H)\leq \chi'(G)+\chi'(H) \leq \Delta+ d+2.
\end{equation}
That is, when $H$ has been edge-coloured efficiently, the answer to Question \ref{qu} is \emph{yes} whenever $d \leq t-2$. Since extension can be impossible when $d>t$ (according to the above paragraph), this makes $d\in\{t-1, t\}$ the only interesting values in this case, with further restrictions if any of the inequalities in (\ref{Hefficient}) are strict. For example, if both $G$ and $H$ have chromatic index equal to their maximum degrees, then the colouring described above works whenever $d\leq t$, and hence we get a sharp threshold. Of course, this only works when $H$ has been edge-precoloured efficiently, and in general we have no control over the edge-precolouring on $H$.

While edge-colouring is in general an NP-hard problem, the situation is somewhat simpler for planar
graphs. For $\Delta=2,3,4,5$ there are examples of planar graphs with
chromatic index $\Delta$ and $\Delta+1$. However, every planar $G$
with $\Delta\geq 7$ is $\Delta$-edge-colourable; the case $\Delta=7$
was proved independently by Sanders and
Zhao \cite{SZ} and Zhang \cite{Zh}, and the case $\Delta\geq 8$ was
proved by Vizing, who conjectured it should also hold for 6 (as well
as the now-established 7). When focusing on planar graphs, there are
additional techniques at one's disposal, in particular the so-called
discharging method, that make edge-colouring easier.

We make progress on Question \ref{qu} in this paper by focusing on
planar graphs. In particular, we prove that the answer to Question
\ref{qu} is \emph{yes} whenever $d\leq t$, provided $d$ is small enough or $\Delta$ is large
enough. As discussed above, the $d\leq t$ assumption is sharp.
(In fact, we actually prove a stronger result involving
  list edge colouring, stated at the end of this section as Theorem~\ref{main},
  but more exposition is required to properly state and contextualize the
  stronger result.)

\begin{theorem}\label{maincor} Let $G$ be a planar graph of maximum
  degree at most $\Delta$, let $t$ be a positive integer, and let $H$ be a subgraph of $G$ that
  has been $(\Delta+t)$-edge-coloured.  If $H$ has maximum degree at
  most $d$, then the edge-precolouring can be extended to a
  $(\Delta+t)$-edge-colouring of $G$ provided that either:
  \begin{enumerate}
  \item $d \leq t-4$, or
  \item $t-3\leq d \leq t$ and
    \[ \Delta \geq
      \begin{cases}
        16 + d, & \text{if $d = t$}, \\
        9 + d, & \text{if $d = t-1$}, \\
        8 + d, & \text{if $d = t-2$}, \\
        7 + d, & \text{if $d = t-3$}.
      \end{cases} \]
  \end{enumerate}
\end{theorem}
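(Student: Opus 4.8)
The plan is to derive Theorem~\ref{maincor} from its list-colouring strengthening Theorem~\ref{main} (apply it with $L(e)=\{1,\dots,\Delta+t\}$ for every $e$), and to prove the list statement by a minimal-counterexample argument combined with discharging. So fix $t$ and suppose $(G,L,H)$ is a counterexample with $\sizeof{E(G)\setminus E(H)}$ minimum and, among those, $\sizeof{E(G)}$ minimum; write $F:=E(G)\setminus E(H)$ for the uncoloured edges and $c$ for the given $L$-colouring of $H$. One may assume $G$ is connected and that every vertex lies on an edge of $F$ (a component, or a vertex, all of whose edges are precoloured can be discarded). The goal is to list a handful of local configurations, show each is \emph{reducible} — impossible in our minimal $(G,L,H)$ — and then argue by discharging that a planar graph satisfying the hypotheses and avoiding all of them cannot exist.

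\textbf{The basic reduction.} If $e=uv\in F$ with $\deg_G(u)+\deg_G(v)\le\Delta+t+1$, then $(G-e,L,H)$ still satisfies the hypotheses, so by minimality its precolouring extends; now $e$ is adjacent to at most $(\deg_G(u)-1)+(\deg_G(v)-1)\le\Delta+t-1$ coloured edges, so some colour of $L(e)$ (a set of size $\ge\Delta+t$) is available for $e$, a contradiction. Hence in $G$ \emph{every uncoloured edge has both endpoints of degree at least $t+2$}; since $\deg_H(v)\le d\le t$ for all $v$, this also pins down the low-degree vertices: a vertex of degree at most $t+1$ has all of its edges precoloured (and so has degree at most $d$).

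\textbf{Recolouring reductions.} The basic reduction alone is too weak — it leaves open, for example, a vertex $v$ of degree $t+2$ carrying one or two uncoloured edges whose far endpoints have degree $\Delta$. Here one argues as in Vizing's theorem but with a frozen subgraph: delete the uncoloured edges at $v$, extend by minimality, and reinsert them by rotating Vizing fans rooted at $v$ together with Kempe swaps along $2$-coloured alternating paths. At $v$ there are at least $(\Delta+t)-(\deg_G(v)-1)$ colours missing, which gives room to start such fans, and a precoloured edge can block a swap only at a vertex where many colours are already committed. Made precise, this yields a family of reducible configurations of the rough form ``a vertex of degree between $t+2$ and $t+k$ carrying $j$ uncoloured edges, all with high-degree far endpoints,'' where the value of $k$ one can afford grows with the slack of $\Delta+t$ beyond $\deg_G(v)$. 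This is exactly where the split between $d=t$ and $d\le t-1$ enters: when $d=t$ an uncoloured edge can see $\Delta+t$ distinctly coloured edges and leave \emph{no} immediately free colour, forcing a longer fan and hence a larger lower bound on $\Delta$; when $d\le t-1$ there is always a spare colour, and one needs progressively less from $\Delta$ as $t-d$ grows.

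\textbf{Discharging, and the main obstacle.} Assign each vertex $v$ the charge $\deg_G(v)-4$ and each face $f$ the charge $\deg(f)-4$, so the total is $-8$ by Euler's formula; then, using the reducible configurations (small-degree vertices carry only precoloured edges and are surrounded by large-degree vertices, short faces are controlled, etc.), design rules shipping charge from high-degree vertices and long faces to the low-degree vertices and short faces so that, under the stated bound on $\Delta$, every vertex and face finishes nonnegative — contradicting the total $-8$. The constants $16+d,\,9+d,\,8+d,\,7+d$ are the least values of $\Delta$ for which this last step survives in the cases $d=t,t-1,t-2,t-3$; for $d\le t-4$ the slack is large enough that the charge inequalities hold with no lower bound on $\Delta$. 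I expect the genuinely hard part to be the recolouring reductions: Vizing-fan and Kempe-chain arguments must be executed in the presence of a precolouring that can sit precisely on the edges one wants to recolour, so one must show that enough fans and alternating paths avoid $E(H)$, and quantifying ``enough'' as a function of $t-d$ is what forces both the case analysis and the precise constants. Making a single set of discharging rules close the gap uniformly across the cases, rather than re-engineering them each time, is a secondary bookkeeping burden.
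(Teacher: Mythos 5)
Your skeleton (reduce to the list version, take a minimal counterexample, kill uncoloured edges with small degree sum, then discharge) matches the paper's, and your ``basic reduction'' is exactly Claim~\ref{degsum}. But the step you yourself flag as ``the genuinely hard part'' --- reinserting uncoloured edges at low-degree vertices via Vizing fans and Kempe swaps --- is a genuine gap, and for two reasons it is not just unfinished but the wrong tool here. First, you are proving the \emph{list} statement (Theorem~\ref{main}), and Kempe swaps along $2$-coloured alternating paths are not available in list edge colouring: after a swap an edge may receive a colour outside its list. Second, even in the non-list setting the precoloured edges are frozen, so fans and chains passing through $E(H)$ cannot be rotated or swapped, and you give no mechanism for routing around them. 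The paper's reducibility engine is entirely different: it applies the bipartite list-edge-colouring theorem of Borodin--Kostochka--Woodall (Theorem~\ref{thm:bipedge}) to show that the bipartite subgraph of $G-E(H)$ induced between a band of low-degree vertices and a band of high-degree vertices contains no ``bad'' induced subgraph (one whose degrees are large enough that it could be coloured last). This is Lemma~\ref{cyclefix}, and its output is not a forbidden local configuration but a \emph{global counting inequality} bounding $(t+1-d)\sizeof{A}$ by a weighted sum of $\sizeof{V_i}$ over high degrees $i$.

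That global inequality is also why your discharging plan, as described, would not close. The problematic vertices are those of degree between $t+2$ and roughly $t+5$ carrying uncoloured edges; locally they can be deficient no matter how you route charge from neighbours and faces, because their high-degree partners may be far away in the plane. The paper handles this with a ``global pot'' $P$: rules (c) and (d) move charge between $P$ and the low- and high-degree vertex classes, and the pot ends up nonnegative precisely because of the counting inequality from Lemma~\ref{cyclefix} (Claim~\ref{coef}); the constants $16+d, 9+d, 8+d, 7+d$ come out of making rule (d)'s payments affordable for vertices of degree near $\Delta$ (Table~\ref{tab:alphaprime}), not out of tuning local face/vertex rules. A purely local scheme with charges $\deg(v)-4$ and $\deg(f)-4$ has no way to import this global information. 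You would also need the paper's reduction eliminating vertices of degree in $[2,t+1]$ (Claim~\ref{2tplus1empty}, splitting such a vertex into precoloured pendant edges), which your minimality criterion does not obviously support.
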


Theorem \ref{maincor} does not include the case $t=0$, however the requirement of $d\leq t$ means that would correspond to $H$ being edgeless. Then the problem is not about precolouring at all, but simply about edge-colouring planar graphs as discussed above.

The case $d=t=1$ of Theorem \ref{main} was previously established by
Edwards, Gir\~{a}o, van den Heuvel, Kang, Sereni and the third author
\cite{EGVKPS}, with the slightly stronger assumption of
$\Delta\geq 19$. (Note that the restriction of our proof for Theorem
\ref{maincor} to this case provides a somewhat new proof; both arguments
use global discharging, but we discharge in a different way). After
the seminal work of Marcotte and Seymour \cite{MS}, the vertex-version
of the precolouring extension problem received much more attention
than Question \ref{qu}.  Edwards et al.~\cite{EGVKPS} re-initiated
this study in their paper, with planar graphs being only one of the
many families they considered. The main concern in \cite{EGVKPS}
however is when $H$ is a matching, and in order to guarantee
extensions they often impose distance conditions on the edges in the
precoloured matching. In particular, this means avoiding the issues
with $t$ being too small as exhibited in Figures
\ref{tlessthankminus1} and \ref{tlessthank}. Specifically, in addition
to the aforementioned result for $d=t=1$, they showed that if $H$ is
an edge-precoloured matching in a planar graph $G$ where edges are at distance at least 3
from one another, then any $\Delta$-edge-colouring on $H$ can be
extended to $G$ provided $\Delta\geq 20$. More recently,
Gir\~ao and Kang~\cite{girao-kang} studied extension from precoloured
matchings in general graphs, proving that if $H$ is a matching in a
(not necessarily planar) graph $G$ where
edges are distance at least $9$ from each other, then any $(\Delta+1)$-edge-colouring
on $H$ can be extended to a $(\Delta+1)$-edge-colouring of $G$.

As Edwards et al. \cite{EGVKPS} observed, extending an edge-colouring is closely
related to list-edge-colouring. An \emph{edge list assignment} on a graph $G$ is a function
  $L$ that assigns to each edge $e \in E(G)$ a list of colours $L(e)$.
  If $L$ is an edge list assignment on a graph $G$, an
  \emph{$L$-edge-colouring} of $G$ is an edge-colouring of $G$ such
  that every edge $e$ is given a colour from $L(e)$. Note that a
  classical $k$-edge-colouring of $G$ can be viewed as an
  $L$-edge-colouring for the list assignment $L$ defined by
  $L(e) = \{1,\ldots, k\}$ for all $e \in E(G)$. A graph $G$ is
  \emph{$k$-list-edge-colourable} if it is $L$-edge-colourable for every
  edge list assignment $L$ such that $\sizeof{L(e)} \geq k$ for all
  $e \in E(G)$. The notorious List-Edge-Colouring Conjecture
(attributed to many sources, some as early as 1975; see
\cite{toft-jensen}) asserts that every $G$ is
$\chi'(G)$-list-edge-colourable. If this conjecture is true, then
given the above discussion on the chromatic index of planar graphs,
$G$ \emph{should} be $\Delta$-list-edge-colourable whenever
$\Delta\geq 7$ (or perhaps 6). This has been verified when
$\Delta\geq 12$.

\begin{theorem}[Borodin, Kostochka, and Woodall
  \cite{BKW}] \label{thm:oldBKW} If $G$ is a planar graph with
  maximum degree $\Delta(G) \geq 12$, then $G$ is
  $\Delta(G)$-list-edge-colourable.
\end{theorem}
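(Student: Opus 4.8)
The plan is to recast the statement as ``every planar graph $G$ is $k$-edge-choosable with $k := \max\{12, \Delta(G)\}$'' — which is equivalent to the theorem, since $k = \Delta(G)$ when $\Delta(G) \geq 12$ — and to prove it by contradiction using the discharging method. Suppose it fails and let $G$ be a counterexample with $|E(G)|$ minimum; fix an edge list assignment $L$ with $|L(e)| \geq k$ for every $e$ that witnesses the failure, and assume $G$ is connected and embedded in the plane. Since deleting edges from $G$ only decreases the maximum degree (and hence does not increase $k$), minimality gives the inductive hypothesis that every proper subgraph of $G$ admits an $L$-edge-colouring.

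The first task is to build a stock of reducible configurations — local structures that cannot appear in $G$. The prototype is a counting argument: if $uv \in E(G)$ has $d(u) + d(v) \leq k + 1$, then we may delete $uv$, $L$-edge-colour $G - uv$ by induction, and colour $uv$ last, since it meets at most $(d(u)-1) + (d(v)-1) \leq k - 1 < |L(uv)|$ already-coloured edges. Hence every edge of $G$ joins two vertices of degree sum at least $k + 2$; in particular $\delta(G) \geq 2$, the low-degree vertices form an independent set, and every $2$-vertex has both neighbours of degree exactly $\Delta$. Beyond this one needs finer configurations: several patterns around vertices of degree up to roughly $5$ with prescribed neighbour-degrees, and around $3$-faces with prescribed incident degrees. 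Each is shown reducible by the same template — delete a few edges near the configuration, colour $G$ minus those edges by induction, then reinsert them by list-colouring, in a carefully chosen order, the clique of edges incident to the relevant low-degree vertex, using $d(u) + d(v) \geq k + 2$ to ensure a free colour survives at every step. Proving these reducibilities is the delicate part of the argument, because the classical edge-colouring machinery (Vizing fans, Kempe chain recolourings) does not transfer to the list setting; one must argue greedily, essentially via a degeneracy/kernel analysis inside the line graph.

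With the reducible configurations in hand, the proof finishes by discharging. Assign charge $\mu(v) = d(v) - 4$ to each vertex $v$ and $\mu(f) = d(f) - 4$ to each face $f$; by Euler's formula $\sum_v \mu(v) + \sum_f \mu(f) = 4|E| - 4(|V| + |F|) = -8 < 0$. One then designs rules sending charge from high-degree vertices — which begin with large positive surplus $d(v) - 4$ — outward along incident edges and to incident $3$-faces, enough to wipe out the deficits at $2$- and $3$-vertices (charges $-2$ and $-1$) and at $3$-faces (charge $-1$), every larger face starting nonnegative. Invoking the reducible configurations to bound how many needy neighbours a big vertex must support and to constrain the structure around triangular faces, one checks that after redistribution every vertex and every face has nonnegative charge, contradicting the total $-8$. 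The bound $\Delta \geq 12$ is precisely the threshold making this bookkeeping close, so I expect the main obstacle to be co-designing the reducible configurations and the discharging rules so that they interlock — with the tight cases being, as usual, the $2$-vertices and the $3$-faces.
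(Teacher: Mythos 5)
First, note that the paper does not prove this statement: Theorem~\ref{thm:oldBKW} is quoted from Borodin, Kostochka and Woodall \cite{BKW} and used as a black box, so there is no internal proof to compare against. Judged as a stand-alone argument, your proposal has a genuine gap: it is a plan for a proof rather than a proof. The parts you do carry out are correct --- the reformulation with $k=\max\{12,\Delta(G)\}$, the reducibility of an edge $uv$ with $d(u)+d(v)\le k+1$ (delete it, colour $G-uv$ by minimality, and colour $uv$ from a surviving colour of $L(uv)$), the consequences $\delta\ge 2$ and that every $2$-vertex has both neighbours of degree $\Delta$, and the charge total $-8$ from Euler's formula. But everything that makes the theorem true at the threshold $\Delta\ge 12$ is left unspecified: you never list the ``finer configurations'' around vertices of degree roughly $3$ to $5$ and around $3$-faces, never prove any of them reducible, never state the discharging rules, and never verify that the final charges are nonnegative. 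These are exactly the places where such an argument can fail, and you concede as much yourself when you say the main obstacle is co-designing the configurations and the rules so that they interlock.

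The reducibility step in particular cannot be waved through. Once a configuration requires reinserting more than one edge, the greedy count $d(u)+d(v)-2\le k-1$ no longer closes on its own; in the list setting one typically needs either a careful ordering of the reinserted edges together with degree-sum information sharper than $d(u)+d(v)\ge k+2$, or a kernel/Galvin-type argument on an auxiliary bipartite piece (in the spirit of Theorem~\ref{thm:bipedge}), and which device works depends on exactly which configurations the discharging forces to exist. Without exhibiting an explicit unavoidable set and checking each member, the bound $12$ is not established --- the same outline, with nothing changed, would ``prove'' the statement with $12$ replaced by $10$, which is a sign that the real content is missing. The strategy you describe is indeed the one used in \cite{BKW}, so the outline is sound, but as written it delivers only the routine half of the argument.
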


Borodin~\cite{borodin} proved a similar result; a short proof
  of this result was later obtained by Cohen and Havet~\cite{cohen-havet}.

  \begin{theorem}[Borodin~\cite{borodin}]\label{thm:Bor}
    If $G$ is a planar graph with maximum degree $\Delta(G) \geq 9$,
    then $G$ is $(\Delta(G)+1)$-list-edge-colourable.
  \end{theorem}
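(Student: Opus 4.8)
The plan is to prove Theorem~\ref{thm:Bor} by the discharging method, in the style standard for planar edge-colouring results. Suppose the theorem fails, and let $G$, together with a bad edge list assignment $L$, be a counterexample with $\sizeof{E(G)}$ as small as possible; by discarding colours we may assume $\sizeof{L(e)} = \Delta + 1$ for every edge, where $\Delta := \Delta(G) \geq 9$. When $\Delta \geq 12$ one could instead simply invoke Theorem~\ref{thm:oldBKW}, which already yields $\Delta$-list-edge-colourability and hence the (weaker) conclusion sought here, so only $\Delta \in \{9,10,11\}$ genuinely needs the argument, though the argument below does not distinguish these cases. Routine reductions --- deleting a pendant or degree-$2$ vertex, colouring the smaller graph by minimality, and re-inserting edges greedily, using that an edge $uv$ meets only $d(u)+d(v)-2$ other edges --- show that $G$ is connected with $\delta(G) \geq 3$, and, more usefully, that $G$ has no \emph{light edge}: no edge $uv$ with $d(u)+d(v) \leq \Delta + 2$. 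Consequently every $k$-vertex has all its neighbours of degree at least $\Delta + 3 - k$; in particular a $3$-vertex is adjacent only to $\Delta$-vertices, so low-degree vertices are surrounded by high-degree vertices.

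The next step is to enlarge this to a full list of reducible configurations concentrated near the low-degree vertices. A typical reduction deletes a low-degree vertex $v$ (or one incident edge), colours the reduced graph by minimality, and then recovers the edges at $v$: each edge $vx$ retains at least $\Delta + 2 - d(x)$ admissible colours, and one needs these residual lists to support a system of distinct representatives (Hall's condition) --- possibly after first recolouring a few nearby edges by Vizing-fan or Kempe-chain-type moves, which in the list setting require care, since a swap along a two-colour alternating path is legitimate only when every edge of the path has both colours in its list. In this way one rules out the configurations that typically obstruct $(\Delta+1)$-choosability: a low-degree vertex incident to several short faces, a low-degree vertex with too many low-degree neighbours, two adjacent low-degree vertices sharing a short face, and so on. Each such reduction is a finite case-check.

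With the reducible configurations in hand, I would carry out discharging. Give each vertex $v$ the initial charge $d(v) - 4$ and each face $f$ the charge $d(f) - 4$; by Euler's formula the total is $-8 < 0$. I would then prescribe discharging rules --- in outline, large-degree vertices and long faces each send fixed fractions of their surplus charge to incident triangular faces and to nearby low-degree vertices, the precise fractions tuned to the situation --- and verify, using the structural restrictions from the previous step (small vertices have only very large neighbours, triangular faces cannot cluster, and so on), that after discharging every vertex and every face has nonnegative charge. As discharging preserves the total, this contradicts $-8 < 0$, completing the proof.

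The hard part is the usual one for this family of theorems: choosing a list of reducible configurations rich enough that some system of discharging rules can be made to balance, while keeping every reduction valid --- and in the list-colouring setting this means replacing genuine Kempe chains by either a careful up-front choice of the colouring of the reduced graph or by swaps confined to paths all of whose edges share the relevant pair of colours. Calibrating the rules so that they survive the tight cases $\Delta \in \{9,10,11\}$, where essentially no slack is available, is where the real effort goes; for large $\Delta$ the scheme has so much room that, as noted, Theorem~\ref{thm:oldBKW} can be quoted instead.
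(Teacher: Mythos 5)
This statement is not proved in the paper at all: it is quoted as a known theorem of Borodin~\cite{borodin} (with a short proof later given by Cohen and Havet~\cite{cohen-havet}), so there is no in-paper argument to compare yours against. Judged on its own terms, your proposal is a description of the discharging method rather than a proof. The parts you actually carry out are correct and standard: minimality of a counterexample gives $\delta(G)\geq 3$ and rules out any edge $uv$ with $\deg(u)+\deg(v)\leq \Delta+2$, hence every $k$-vertex has only neighbours of degree at least $\Delta+3-k$; and for $\Delta\geq 12$ one can indeed just quote Theorem~\ref{thm:oldBKW}. But everything after that is deferred: you never specify the reducible configurations, never prove any of them reducible, never state the discharging rules, and never verify nonnegativity of the final charges. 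You say as much yourself (``the precise fractions tuned to the situation,'' ``the hard part is the usual one'').

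That deferred material is the entire content of the theorem. The obstacle you gesture at --- that Kempe-chain and Vizing-fan recolourings do not transfer to the list setting, because the two colours being swapped need not both lie in the lists of the edges along the chain --- is exactly why this result is hard and why the known proofs are nontrivial; finding configurations whose reducibility can be certified without genuine Kempe chains is where the work lies. Moreover, the two elementary facts you do establish are nowhere near enough to make a charge assignment such as $\deg(v)-4$ for vertices and $\deg(f)-4$ for faces balance when $\Delta\in\{9,10,11\}$: for instance, a $\Delta$-vertex lying on $\Delta$ triangles and adjacent to several $3$-vertices is consistent with ``minimum degree $3$'' and ``no light edge,'' yet its surplus of $\Delta-4$ cannot cover the natural demands of those faces and neighbours, so further reducibility lemmas are indispensable --- and they are precisely what is missing. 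As written, this is a plan for a proof with the difficult steps labelled but not executed.
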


In the present paper we have in fact proved the list-edge-colouring analog of Theorem \ref{maincor}. This stronger result is as follows.

\begin{theorem}\label{main} Let $G$ be a planar graph of maximum
  degree at most $\Delta$, let $L$ be an edge list assignment on $G$
  with $\sizeof{L(e)} \geq \Delta+t$ for all $e \in E(G)$, where $t$
  is a positive integer, and let $H$ be a subgraph of $G$ that
  has been $L$-edge-coloured.  If $H$ has maximum degree at
  most $d$, then the edge-precolouring can be extended to an
  $L$-edge-colouring of $G$ provided that either:
  \begin{enumerate}
  \item $d \leq t-4$, or
  \item $t-3\leq d \leq t$ and
    \[ \Delta \geq
      \begin{cases}
        16 + d, & \text{if $d = t$}, \\
        9 + d, & \text{if $d = t-1$}, \\
        8 + d, & \text{if $d = t-2$}, \\
        7 + d, & \text{if $d = t-3$}.
      \end{cases} \]
  \end{enumerate}
\end{theorem}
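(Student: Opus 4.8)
The plan is to argue by contradiction. Let $(G,L,H)$ be a counterexample minimizing the number of uncoloured edges $\sizeof{E(G)\setminus E(H)}$; we may assume $G$ is connected and fix a plane embedding. The key reducible configuration is an uncoloured edge of small degree-sum: if $e=uv\in E(G)\setminus E(H)$, then $(G-e,L,H)$ is again an instance satisfying all the hypotheses but with one fewer uncoloured edge, so by minimality its precolouring extends to an $L$-edge-colouring of $G-e$; since $e$ meets only $d_G(u)+d_G(v)-2$ other edges and $\sizeof{L(e)}\geq\Delta+t$, we can then colour $e$ whenever $d_G(u)+d_G(v)\leq\Delta+t+1$, a contradiction. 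Hence in $G$ every uncoloured edge $uv$ satisfies $d_G(u)+d_G(v)\geq\Delta+t+2$; as $d_G(v)\leq\Delta$, this forces $d_G(u)\geq t+2$, so both endpoints of every uncoloured edge have degree at least $t+2$.

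Let $G'$ be the spanning subgraph of $G$ with edge set $E(G)\setminus E(H)$. If $G'$ has no edges then $G=H$ is already $L$-edge-coloured, a contradiction; so $G'$ has an edge. Any vertex $v$ incident to an uncoloured edge has $d_G(v)\geq t+2$, of which at most $d_H(v)\leq d$ lie in $H$, so $v$ has at least $(t+2)-d=(t-d)+2$ uncoloured edges at it. In Case 1, where $d\leq t-4$, this number is at least $6$; hence, after deleting isolated vertices, $G'$ is a nonempty planar graph of minimum degree at least $6$, contradicting the fact that every planar graph has a vertex of degree at most $5$. This settles Case 1 with no discharging and no hypothesis on $\Delta$.

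In Case 2 we have $s:=t-d\in\{0,1,2,3\}$, and the argument above only yields that $G'$ (minus isolated vertices) is planar with minimum degree at least $s+2\in\{2,3,4,5\}$, which no longer contradicts planarity. I would first develop a larger stock of reducible configurations. One family comes from deleting several uncoloured edges incident to a common low-degree vertex, extending by minimality, and then recolouring the deleted edges one at a time in order of non-increasing neighbour-degree; this shows, for instance, that an uncoloured edge leaving a vertex of degree $t+2$ must terminate at a vertex of degree exactly $\Delta$, and similar statements for somewhat larger low-degree vertices. A second family comes from restricted Vizing-fan and Kempe-chain arguments rooted at the endpoints of an uncoloured edge; the essential constraint, which makes these weaker than usual, is that precoloured edges can never be recoloured, so all recolouring must stay inside $G'$ (which is possible because, by the degree bounds above, vertices of degree at least $t+2$ carry many uncoloured edges). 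With these configurations excluded, I would run a discharging argument, assigning the standard Euler charge $d(v)-4$ to each vertex and $d(f)-4$ to each face (total charge $-8$) and redistributing charge from high-degree vertices --- which are plentiful, since the degree-sum condition forces every uncoloured edge to have a high-degree endpoint --- to the low-degree vertices and triangular faces that carry negative charge. The four hypotheses $\Delta\geq 7+d,\,8+d,\,9+d,\,16+d$ in the respective subcases $s=3,2,1,0$ are precisely the amounts of slack needed to make the redistributed charge nonnegative everywhere, contradicting the total of $-8$; the ``global'' character of the discharging reflects that charge must sometimes be routed along paths of uncoloured edges rather than only between incident objects.

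The main obstacle is Case 2 with $d=t$ (no slack, $s=0$): here the reducible configurations are most delicate and the required degree bound $\Delta\geq 16+d$ is largest, so the discharging has the least room. More broadly, throughout Case 2 the immovability of $H$ is the recurring difficulty --- the usual recolouring toolkit is available only along uncoloured edges, every reduction must be engineered so as not to disturb $E(H)$, and the discharging rules must remain valid in the presence of triangular faces all of whose edges lie in $H$. Checking that the final charge is nonnegative at every vertex and face, across all four subcases and all surviving local configurations, is where the bulk of the technical work lies.
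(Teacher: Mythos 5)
Your Case 1 argument is correct and is in fact slicker than what the paper does there: the minimal-counterexample reduction gives $\deg_G(u)+\deg_G(v)\geq\Delta+t+2$ for every uncoloured edge, hence both endpoints have degree at least $t+2$ and carry at least $t+2-d\geq 6$ uncoloured edges, so the uncoloured subgraph would be a simple planar graph of minimum degree $6$ --- impossible. No discharging is needed. (The paper instead runs its full discharging machinery even in this case, so for $d\leq t-4$ you have found a genuinely more elementary route.)

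Case 2, however, is a plan rather than a proof, and the plan is missing the one idea that actually makes the paper's argument close. The paper's engine is a global counting lemma (Lemma~\ref{cyclefix}): for suitable degree ranges $A=V_{[a_0,a]}$ and $B=V_{[b_0,\Delta]}$, it considers the bipartite graph $X$ of uncoloured edges between $A$ and $B$ and shows that $X$ has no induced subgraph $J$ with $\deg_J(u)\geq\deg_G(u)-t$ on the $A$-side and $\deg_J(v)\geq a+\deg_G(v)-(\Delta+t)$ on the $B$-side --- because such a $J$ could be list-edge-coloured last by the Borodin--Kostochka--Woodall theorem on bipartite list edge colouring (Theorem~\ref{thm:bipedge}). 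Iteratively peeling off violating vertices then yields the inequality $(t+1-d)\sizeof{A}\leq\sum_i(a+i-1-(\Delta+t))\sizeof{V_i}$, a \emph{global} bound on the number of low-degree vertices in terms of a weighted count of high-degree vertices. This inequality is exactly what funds the ``global pot'' in the discharging and is why the case $d=t$ closes at $\Delta\geq 16+d$. Your proposal has no substitute for it: purely local discharging cannot work here (a vertex of degree $t+2$ has essentially no spare charge, and nothing forces enough high-degree vertices to be \emph{incident} to it to receive charge locally), and your proposed second family of reducible configurations --- Vizing fans and Kempe chains --- is not available in the list setting, where an ``$(\alpha,\beta)$-chain'' is not well defined because adjacent edges have different lists; note that the theorem being proved is the list version. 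Until you replace those tools with something like the bipartite-kernel argument above (or an equivalent global inequality), Case 2, and in particular the subcase $d=t$, remains open in your write-up.
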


We again omit the case $t=0$, however the required $d\leq t$ condition means that $H$ is edgeless and hence the best result is that of Theorem~\ref{thm:oldBKW} above. Theorem \ref{main} does have something meaningful to say when $H$ edgeless however: the case $t=1$ and $d=0$ gives Theorem \ref{thm:Bor} precisely.

The following section contains some technical results needed for our proof of Theorem \ref{main}, which comprises Section 3. The final section of this paper, Section 4, is about pushing Theorem \ref{main} beyond planar graphs. We show that requiring $G-E(H)$ to be planar is sufficient, and in fact ``planar'' can be replaced by ``non-negative Euler characteristic''.

\section{Technical Lemmas}
In this section, we gather some technical lemmas that will be
needed for the proof of Theorem~\ref{main}.

\begin{theorem}[Borodin, Kostochka, Woodall
  \cite{BKW}] \label{thm:bipedge} Let $G$ be a bipartite graph and let
  $L$ be an edge list assignment on $G$. If
  $\sizeof{L(xy)} \geq \max\{\deg(x), \deg(y)\}$ for every edge
  $xy \in E(G)$, then $G$ is $L$-edge-colourable.
\end{theorem}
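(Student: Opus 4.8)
The plan is to prove this by Galvin's kernel method. An $L$-edge-colouring of $G$ is exactly a list-vertex-colouring of the line graph $L(G)$, whose vertices are the edges of $G$, with two of them adjacent when they share an endpoint in $G$. Recall the Bondy--Boppana--Siegel kernel lemma: if $D$ is an orientation of a graph $F$ in which every induced subdigraph has a kernel --- an independent set meeting the out-neighbourhood of every vertex outside it --- and $\sizeof{\mathcal{L}(v)} \geq d^+_D(v) + 1$ holds for every vertex $v$, then $F$ is $\mathcal{L}$-colourable. Applying this with $F = L(G)$, it suffices to produce a \emph{kernel-perfect} orientation $D$ of $L(G)$ (every induced subdigraph has a kernel) in which every vertex $e = xy$ satisfies $d^+_D(e) \leq \max\{\deg(x),\deg(y)\} - 1$, since then $\sizeof{L(e)} \geq \max\{\deg(x),\deg(y)\} = d^+_D(e) + 1$, as needed.

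For the orientation I would use the fact that the relevant orientations of line graphs of bipartite graphs come from stable matchings. At each vertex $v$ of $G$, fix an arbitrary linear preference order $\prec_v$ of the edges incident with $v$, and orient $L(G)$ by setting $e \to f$, for edges $e,f$ sharing a vertex $v$, exactly when $v$ prefers $f$ to $e$. For any vertex set $W \subseteq E(G)$, a kernel of the induced subdigraph $D[W]$ is precisely a stable matching of the ``market'' on the edges in $W$ under these preferences: such a set $K$ is independent in $L(G)$, and requiring every $e \in W \setminus K$ to have an out-neighbour in $K$ says exactly that $e$ is not a blocking edge. Since stable matchings exist for every preference profile by the Gale--Shapley theorem, $D[W]$ has a kernel for every $W$, so $D$ is kernel-perfect \emph{no matter how the orders $\prec_v$ are chosen}. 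Moreover the out-degree of $e = xy$ is simply the number of edges preferred to $e$ at $x$ plus the number preferred to $e$ at $y$.

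The crux is therefore purely combinatorial: choose the orders $\prec_v$ so that for every edge $e = xy$, the number of incident edges preferred to $e$ at $x$ and the number preferred to $e$ at $y$ sum to at most $\max\{\deg(x),\deg(y)\} - 1$. A natural starting point is a proper edge-colouring $\phi$ of $G$ with $\Delta$ colours, which exists by K\"onig's edge-colouring theorem since $G$ is bipartite; ordering the edges at each $X$-vertex by increasing colour and at each $Y$-vertex by decreasing colour (Galvin's convention) makes the out-degree of $e = xy$ equal to $(\deg(x) - r_x) + (r_y - 1)$, where $r_x, r_y$ are the ranks of $\phi(e)$ among the colours present at $x$ and at $y$. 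This yields the uniform bound $\Delta - 1$ at once, and the entire difficulty is to sharpen it to the \emph{local} quantity $\max\{\deg(x),\deg(y)\} - 1$: concretely, for each edge with $\deg(x) \geq \deg(y)$ one wants $e$ to rank at least as high at its heavier endpoint as at its lighter one, i.e.\ $r_y \leq r_x$. A double-counting check is encouraging --- summing the target bound over all edges leaves slack $\tfrac12 \sum_{xy \in E(G)} \sizeof{\deg(x) - \deg(y)}$ over the forced total $\sum_v \binom{\deg(v)}{2}$ --- so the constraint is satisfiable on average, with room to spare whenever $G$ is far from regular.

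Converting this averaged feasibility into an actual choice of orders is the main obstacle. I would attack it by induction, repeatedly extracting a matching $M$ that saturates every maximum-degree vertex of $G$ (such a matching exists in any bipartite graph), placing the edges of $M$ at the tops of the orders at their endpoints, and recursing on $G - M$, which has strictly smaller maximum degree. The delicate point is the degree bookkeeping: naively making each $M$-edge most-preferred at both of its ends loses by exactly one for a non-matching edge whose two endpoints both have maximum degree, so the induction must be set up with a stronger invariant --- choosing which matching to peel, or at which rank to insert its edges, so that the extracted matching absorbs precisely the asymmetry between the two sides. Getting this invariant right is the technical heart of the argument; once orders meeting the out-degree bound $\max\{\deg(x),\deg(y)\} - 1$ are in hand, the kernel lemma of the first paragraph finishes the proof.
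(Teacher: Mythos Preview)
The paper does not prove this theorem; it is quoted from \cite{BKW} and used as a black box. So there is no in-paper argument to compare against.

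Your framework---the kernel lemma applied to a Galvin-type orientation of $L(G)$, reducing everything to preference orders with $d^+(xy)\le\max\{\deg(x),\deg(y)\}-1$---is indeed the approach of \cite{BKW}. But you stop short of actually producing such orders: you say that ``getting this invariant right is the technical heart of the argument'' and then do not supply it. That is the gap, and your diagnosis of where the naive peeling breaks is also slightly off. With Galvin's asymmetric convention (lower colours preferred on the $X$-side, higher on the $Y$-side), giving the peeled matching $M$ the top colour adds an out-neighbour to $e=xy\notin M$ only at its $Y$-end, not at both; so the case you flag---both endpoints of maximum degree---is actually fine (induction on $G-M$ gives $d^+\le\Delta-2$, plus one from the $M$-edge at $y$). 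The configuration that genuinely fails is when $M$ covers $y$ but not $x$, with $\deg(x)\ge\deg(y)$ and both strictly below $\Delta$: induction on $G-M$ gives $d^+\le\max\{\deg(x),\deg(y)-1\}-1=\deg(x)-1$, and the $M$-edge at $y$ pushes this to $\deg(x)=\max\{\deg(x),\deg(y)\}$, one too many. Nothing in your scheme prevents $M$ from covering such low-degree $Y$-vertices. Borodin, Kostochka and Woodall get past this with a more refined colouring lemma; until you supply either that lemma or the strengthened invariant you allude to, the argument is incomplete.
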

Edwards et al.~\cite{EGVKPS} applied Theorem~\ref{thm:bipedge} to obtain
a precolouring extension result for bipartite graphs (Theorem~15 of
\cite{EGVKPS}), which we will use as part of our proof. While the result
as stated in \cite{EGVKPS} only applies to classical edge-precolouring,
a list-edge-colouring version can be obtained using essentially the
same proof:
\begin{theorem} \label{thm:bipextend} Let $G$ be a bipartite
  multigraph, and let $L$ be an edge list assignment on $G$ with
  $\sizeof{L(e)} \geq \Delta+t$ for all $e \in E(G)$. Let $H$ be a
  subgraph of $G$ that has been $L$-edge-coloured. If $H$ has maximum
  degree at most $d$, then the edge-precolouring can be extended to an
  $L$-edge-colouring of $G$ provided that $t \geq d$.
\end{theorem}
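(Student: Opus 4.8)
The plan is to reduce to Theorem~\ref{thm:bipedge} by passing to the subgraph $G' := G - E(H)$ of uncoloured edges and shrinking each list to account for the colours that the precolouring already blocks. Since $G$ is bipartite, so is $G'$. For an edge $e = xy \in E(G')$, let $\Phi(e)$ denote the set of colours that the precolouring assigns to the edges of $H$ incident with $x$ or with $y$, and set $L'(e) := L(e) \setminus \Phi(e)$. Because the precolouring of $H$ is a proper edge-colouring, the edges of $H$ at a vertex $v$ receive $\deg_H(v)$ pairwise distinct colours, so $\sizeof{\Phi(e)} \leq \deg_H(x) + \deg_H(y) \leq 2d$ (parallel precoloured edges at $x$ are correctly counted here, since they too contribute their distinct colours to $\Phi(e)$). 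Any $L'$-edge-colouring of $G'$ then extends the precolouring to all of $G$: each uncoloured edge $e$ gets a colour outside $\Phi(e)$, hence different from the colour of every adjacent precoloured edge, while adjacencies internal to $G'$ and internal to $H$ are taken care of by the two colourings separately.

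The only thing left is to verify the hypothesis of Theorem~\ref{thm:bipedge} for the pair $(G', L')$, namely that $\sizeof{L'(xy)} \geq \max\{\deg_{G'}(x), \deg_{G'}(y)\}$ for every $xy \in E(G')$. Writing $\deg_{G'}(x) = \deg_G(x) - \deg_H(x)$ and using $\sizeof{L'(xy)} \geq (\Delta + t) - \deg_H(x) - \deg_H(y)$, the inequality $\sizeof{L'(xy)} \geq \deg_{G'}(x)$ is equivalent to $\Delta + t \geq \deg_G(x) + \deg_H(y)$, which holds since $\deg_G(x) \leq \Delta$ and $\deg_H(y) \leq d \leq t$; the bound involving $\deg_{G'}(y)$ follows by symmetry. (In particular this shows $L'(e) \neq \emptyset$, which is worth noting since the definition alone does not guarantee it.) Applying Theorem~\ref{thm:bipedge} produces an $L'$-edge-colouring of $G'$, and by the first paragraph this finishes the extension.

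I do not expect any genuine obstacle: this is essentially the list-colouring version of Theorem~15 of \cite{EGVKPS}, proved in the same way. The one point requiring care is the bookkeeping of list sizes, where the slack $\Delta - \deg_G(x)$ at the heavier endpoint together with the hypothesis $t \geq d$ is exactly what absorbs the $\deg_H(x) + \deg_H(y)$ colours deleted from $L(e)$.
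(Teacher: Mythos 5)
Your proposal is correct and follows essentially the same route as the paper: pass to $G' = G - E(H)$, delete from each list the colours used on adjacent precoloured edges, and verify the hypothesis of Theorem~\ref{thm:bipedge} using $\deg_{G'}(x) = \deg_G(x) - \deg_H(x) \leq \Delta - \deg_H(x)$ together with $t \geq d \geq \deg_H(y)$. The only cosmetic difference is that you phrase the list-size check as the single inequality $\Delta + t \geq \deg_G(x) + \deg_H(y)$, whereas the paper splits it into two intermediate bounds; the content is identical.
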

\begin{proof}
  Let $G' = G - E(H)$. For each edge $e \in E(G')$, let $L'(e)$ be
  obtained from $L(e)$ by removing all colours used on the
  edges of $H$ incident to $e$. Let $xy$ be an arbitrary
  edge of $G'$. Now
  \[ \sizeof{L'(xy)} \geq \sizeof{L(xy)} - \deg_H(x) - \deg_H(y) \geq \Delta + t - \deg_H(x) - \deg_H(y). \]
  Since $t \geq d \geq \Delta(H)$, this implies that
  \begin{align*}
    \sizeof{L'(xy)} &\geq \Delta - \deg_H(x), \quad \text{and} \\
    \sizeof{L'(xy)} &\geq \Delta - \deg_H(y).
  \end{align*}
  On the other hand,
  \begin{align*}
    \deg_{G'}(x) &= \deg_G(x) - \deg_H(x) \leq \Delta - \deg_H(x), \quad \text{and} \\
    \deg_{G'}(y) &= \deg_G(y) - \deg_H(y) \leq \Delta - \deg_H(y).
  \end{align*}
  Thus, $\sizeof{L'(xy)} \geq \max\{\deg_{G'}(x), \deg_{G'}(y)\}$, and this holds for all $xy \in E(G')$. By
  Theorem~\ref{thm:bipedge}, it follows that $G'$ is $L'$-edge-colourable,
  and any $L'$-edge-colouring of $G'$ gives the desired $L$-edge-colouring
  of $G$.
\end{proof}

In what follows and in the main argument, given a graph $G$,
we define $V_i(G)=V_i$ as the set of all vertices $v \in V(G)$ with
$\deg(v)=i$, and we define $V_{[a,b]}(G)=V_{[a,b]}$ as
$\cup_{i\in[a,b]}V_i$.
\begin{lemma}\label{cyclefix}
  Let $G$ be a graph of maximum degree at most $\Delta$, and
    let $L$ be an edge list assignment on $G$ with
    $\sizeof{L(e)} \geq \Delta + t$ for all $e \in E(G)$. Let $H$ be
  a subgraph of $G$ with maximum degree at most $d$. Suppose that $H$
  has been $L$-edge-coloured, and that this extends to an
  $L$-edge-colouring of $G-e$ for all
  $e\in E(G)\setminus E(H)$, but not to $G$.

  Let $A=V_{[a_0,a]}$ and $B=V_{[b_0,\Delta]}$, where $a_0,a,b_0$ are
  positive integers with $a_0 \geq  t+1 $, $b_0 > a$, and $ a+b_0 \geq  \Delta+t+1$. Let $X$
  be the bipartite subgraph of $G-E(H)$ induced by the bipartition
  $(A, B)$.  If every vertex $u\in A$ has the property that
  \[\deg_X(u) \geq \deg_G(u)-d,\]
  then
  \[(t+1-d)|A| \leq \sum_{i=b_0}^{\Delta} (a+i-1-(\Delta+t))|V_i|.\]
  Moreover, if $a_0>t+1$ and $a+b_0>\Delta+t+1$ then the above inequality is strict.
\end{lemma}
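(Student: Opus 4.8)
The plan is to prove the contrapositive of the non-strict statement by a single ``peel, then colour'' argument, and to recover the strict version from one extra unit of slack in the count. Assume the displayed inequality fails; I will construct an $L$-edge-colouring of $G$ extending the precolouring of $H$, contradicting the hypothesis. We may assume $d\le t$ and $A\ne\emptyset$ (otherwise $(t+1-d)\sizeof{A}\le 0$ and the inequality is immediate). Two elementary facts follow from the constraints on $a_0,a,b_0$: for every $u\in A$ we have $\deg_G(u)-t\ge a_0-t\ge 1$, and for every $v\in B$ we have $a+\deg_G(v)-1-(\Delta+t)\ge a+b_0-1-(\Delta+t)\ge 0$. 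Now run a peeling process: set $R:=X$ as an edge set, and while there is a vertex $u\in A$ with $\deg_R(u)<\deg_G(u)-t$, or a vertex $v\in B$ with $\deg_R(v)<a+\deg_G(v)-(\Delta+t)$, delete from $R$ all edges at such a vertex. Since $\deg_X(u)\ge\deg_G(u)-d\ge\deg_G(u)-t$, no vertex of $A$ is eligible at the start, and since each threshold is at least $1$, every isolated vertex is eligible; so the process ends with $R$ either empty or with $\deg_R(u)\ge\deg_G(u)-t$ and $\deg_R(v)\ge a+\deg_G(v)-(\Delta+t)$ for all surviving $u\in A$, $v\in B$.

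\emph{The core is empty.} Suppose $R\ne\emptyset$, and pick any $e^*=u^*v^*\in E(R)$ with $u^*\in A$, $v^*\in B$. Since $e^*\in E(X)\subseteq E(G)\setminus E(H)$, the hypothesis gives an $L$-edge-colouring $\phi$ of $G-e^*$ extending the precolouring of $H$. For $e=uv\in E(R)$, let $L_R(e)$ be $L(e)$ with all colours $\phi$ uses on edges of $E(G)\setminus E(R)$ incident to $u$ or to $v$ removed; at most $(\deg_G(u)-\deg_R(u))+(\deg_G(v)-\deg_R(v))$ colours are removed (the edge $e$ itself lies in $R$, so is not double-counted), so $\sizeof{L_R(e)}\ge(\Delta+t)-\deg_G(u)+\deg_R(u)-\deg_G(v)+\deg_R(v)$. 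Using $\deg_G(v)\le\Delta$ and $\deg_R(u)\ge\deg_G(u)-t$ gives $\sizeof{L_R(e)}\ge\deg_R(v)$, and using $\deg_G(u)\le a$ and $\deg_R(v)\ge a+\deg_G(v)-(\Delta+t)$ gives $\sizeof{L_R(e)}\ge\deg_R(u)$. Thus $\sizeof{L_R(e)}\ge\max\{\deg_R(u),\deg_R(v)\}$ for every $e\in E(R)$, so since $R$ is bipartite Theorem~\ref{thm:bipedge} provides an $L_R$-edge-colouring of $R$. Using this colouring on $E(R)$ and $\phi$ on $E(G)\setminus E(R)$ (which contains $E(H)$) yields an $L$-edge-colouring of $G$ extending the precolouring of $H$ --- a contradiction. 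Hence $R=\emptyset$.

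\emph{Counting.} Fix a run of the peeling, and for each deleted vertex $w$ let $r_w=\deg_R(w)$ at the moment $w$ is handled, i.e.\ the number of edges removed at that step. Each edge of $X$ is removed exactly once, when the first of its two endpoints is deleted, so $\sum_w r_w=\sizeof{E(X)}$. Eligibility gives $r_u\le\deg_G(u)-t-1$ for $w=u\in A$ and $r_v\le a+\deg_G(v)-1-(\Delta+t)$ for $w=v\in B$, while $\sizeof{E(X)}=\sum_{u\in A}\deg_X(u)\ge\sum_{u\in A}(\deg_G(u)-d)$ by the hypothesis on $A$. Combining,
\[ \sum_{u\in A}(\deg_G(u)-d)\le\sum_{u\in A}(\deg_G(u)-t-1)+\sum_{v\in B}\bigl(a+\deg_G(v)-1-(\Delta+t)\bigr); \]
subtracting $\sum_{u\in A}(\deg_G(u)-t-1)$ from both sides collapses the left side to $\sum_{u\in A}(t+1-d)=(t+1-d)\sizeof{A}$, and rewriting the last sum as $\sum_{i=b_0}^{\Delta}(a+i-1-(\Delta+t))\sizeof{V_i}$ gives the inequality. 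For the strict version, note that the last vertex $w$ deleted has $r_w=0$: every $X$-edge at $w$ has its other endpoint on the opposite side, which has already been fully deleted, so all such edges were removed before $w$. Yet under the stronger hypotheses $w$'s bound is positive --- $\deg_G(w)-t-1\ge a_0-t-1\ge1$ if $w\in A$, and $a+\deg_G(w)-1-(\Delta+t)\ge a+b_0-1-(\Delta+t)\ge1$ if $w\in B$ --- so the displayed inequality is strict, and hence so is the conclusion.

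The main obstacle is the calibration of the two peeling thresholds: they must be loose enough that the surviving core still satisfies the degree condition $\sizeof{L_R(e)}\ge\max\{\deg_R(u),\deg_R(v)\}$ of Theorem~\ref{thm:bipedge}, yet tight enough that the per-step edge counts $r_u\le\deg_G(u)-t-1$ and $r_v\le a+\deg_G(v)-1-(\Delta+t)$ reproduce exactly the coefficients in the statement. In particular one must peel $A$ at the threshold $\deg_G(u)-t$ rather than the more obviously ``safe'' $\deg_G(u)-d$; it is precisely this choice (together with $\sizeof{E(X)}\ge\sum_{u\in A}(\deg_G(u)-d)$) that upgrades $\sizeof{A}$ to $(t+1-d)\sizeof{A}$ on the left-hand side.
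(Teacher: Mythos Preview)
Your proof is correct and follows essentially the same approach as the paper: define a bipartite ``core'' of $X$ where both degree thresholds are met, show (via Theorem~\ref{thm:bipedge}) that a nonempty core would allow the colouring to be completed, and then count edges in the iterative peeling to obtain the inequality, with the last-removed (isolated) vertex providing the slack for the strict version. The only quibble is expository: you announce a contrapositive argument (``assume the displayed inequality fails'') but never use that assumption --- your peel/colour/count sequence is a direct proof that the inequality holds under the stated hypotheses, exactly as in the paper.
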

\begin{proof}
Say
  that an induced subgraph $J \subseteq X$ is \emph{bad} if
  \begin{itemize}
  \item $\deg_J(u) \geq \deg_G(u) - t$ for all $u \in A \cap V(J)$, and
  \item $\deg_J(v) \geq a + \deg_G(v) - (\Delta+t)$ for all $v \in B \cap V(J)$.
  \end{itemize}
  Notice that for all $u\in A$, $v\in B$,
  \begin{equation}\label{eq:A}\deg_G(u) - t \geq a_0 - t \geq 1\end{equation} and
  \begin{equation}\label{eq:B}a + \deg_G(v) - (\Delta+t) \geq a + b_0 - (\Delta +t) \geq  1,\end{equation}
  so that if a bad induced subgraph exists, it has no isolated vertices,
  and in particular has at least one edge.  We will first show that $X$ has no
  bad induced subgraph, and then show that this implies the desired
  claim.

  Suppose that $X$ has a bad induced subgraph $J$. Let $G' = G-E(J)$.
  Since $E(J)$ is nonempty, $G'$ is a proper subgraph of
  $G$, so by assumption, the edge-precolouring on $H$ extends to an
  $L$-edge-colouring $\varphi$ of $G'$. We derive a
  contradiction by showing we can further extend to an
  $L$-edge-colouring of $G$. To this end, let
    $L^J$ be the edge list assignment on $J$ defined as follows: for each edge $uv \in E(J)$,
    $L^J(uv)$ is the set of colours from $L(uv)$ that do not appear on
    any $G'$-edge adjacent to $uv$. Observe that for each
  $uv \in E(J)$, we have
  \[
  \sizeof{L^J(uv)} \geq \Delta+t - \deg_G(u) - \deg_G(v) + \deg_J(u) + \deg_J(v).
  \]
  Since $J$ is bad, we have $\deg_J(u) \geq \deg_G(u) - t$, so that
  \begin{align*}
    \sizeof{L^J(uv)} &\geq \Delta - \deg_G(v)+ \deg_J(v)
    \geq \deg_J (v),
  \end{align*}
  and likewise $\deg_J(v) \geq \deg_G(v) + a - (\Delta+t)$ so that
  \begin{align*}
    \sizeof{L^J(uv)} &\geq a - \deg_G(u) + \deg_J(u)\geq \deg_J(u).
  \end{align*}
  Hence, for every $uv \in E(J)$, we have
  $\sizeof{L^J(uv)} \geq \max\{d(u), d(v)\}$. By
  Theorem~\ref{thm:bipedge}, $J$ is $L^J$-edge-colourable. Now any
  proper $L^J$-edge-colouring of $J$, combined with the
    $L$-edge-colouring $\varphi$ of $G'$, yields a proper
  $L$-edge-colouring of $G$ that extends the edge-precolouring of $H$
  as desired; contradiction.

  Hence, $X$ contains no bad induced subgraph, and so every induced
  subgraph $J$ of $X$ contains a vertex violating the definition of a
  ``bad'' subgraph.  By iteratively removing these vertices and
  counting the edges removed when each vertex is deleted, we see that
  \begin{align*}
    \sizeof{E(X)} &\leq \sum_{u \in A}[\deg_G(u) - t - 1] + \sum_{v \in B}[a + \deg_G(v) - (\Delta+t) - 1] \stepcounter{equation}\tag{\theequation}\label{eq:thecount}\\
    &\leq \sum_{u \in A}[(\deg_X(u)+d)-t - 1] + \sum_{v \in B}[a + \deg_G(v) - (\Delta+t) - 1] \\
    &= \sizeof{E(X)} + \sum_{u \in A}[d -t- 1] + \sum_{i=b_0}^{\Delta} (a + i - (\Delta+t) - 1)\sizeof{V_i }.
  \end{align*}
  Rearranging the last inequality yields
  \[ (t+1-d)\sizeof{A} \leq \sum_{i=b_0}^{\Delta} (a + i -1- (\Delta+t))\sizeof{V_i}, \]
  which is the desired conclusion. If we additionally know that  $a_0>t+1$ and $a+b_0>\Delta+t+1$, then inequalities (\ref{eq:A}) and (\ref{eq:B}) become strict. Hence each $u\in A$ and $v\in B$ is contributing a positive amount to the right-hand-side of (\ref{eq:thecount}). Since the last vertex removed is isolated, this is an overcount, and hence we get a strict inequality.
\end{proof}
\section{Proof of Theorem~\ref{main}}
For fixed values of $\Delta, t, d$, we choose a counterexample
$(G, H)$ where the quantity $3\sizeof{E(G)} + \sizeof{V_{[2,t+1]}(G)}$
is as small as possible.

\begin{claim}\label{Gminuse} The edge-precolouring on $H$ can be
  extended to an $L$-edge-colouring of $G-e$ for any
  $e\in E(G)\setminus E(H)$.
\end{claim}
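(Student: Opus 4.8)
The plan is a routine minimality argument: delete the edge and invoke Theorem~\ref{main} on the smaller pair. Fix $e = uv \in E(G)\setminus E(H)$ and set $G' = G - e$. First I would verify that $(G', H)$ still satisfies every hypothesis of Theorem~\ref{main} with the \emph{same} parameters $\Delta, t, d$: the graph $G'$ is planar and has maximum degree at most $\Delta$; restricting $L$ to $E(G')$ still gives $\sizeof{L(e')} \geq \Delta + t$ for every $e' \in E(G')$; and since $e \notin E(H)$, the graph $H$ is still a subgraph of $G'$, with maximum degree at most $d$. The numerical conditions relating $\Delta$, $t$, $d$ do not involve $G$ at all, so they are untouched. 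Consequently, if $(G', H)$ is \emph{not} a counterexample to Theorem~\ref{main}, then the edge-precolouring on $H$ extends to an $L$-edge-colouring of $G' = G - e$, and the claim holds.

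So suppose for contradiction that $(G', H)$ \emph{is} a counterexample. Since $(G, H)$ was chosen to minimise $3\sizeof{E(G)} + \sizeof{V_{[2,t+1]}(G)}$ among all counterexamples for these fixed $\Delta, t, d$, we must have
\[ 3\sizeof{E(G')} + \sizeof{V_{[2,t+1]}(G')} \;\geq\; 3\sizeof{E(G)} + \sizeof{V_{[2,t+1]}(G)}. \]
Now $\sizeof{E(G')} = \sizeof{E(G)} - 1$, so the first term drops by exactly $3$. For the second term, deleting $e = uv$ decreases the degrees of only $u$ and $v$ (each by one) and leaves all other degrees unchanged; hence at most two vertices can enter or leave the set $V_{[2,t+1]}$ (namely $u$ and $v$, and only if their $G$-degree is exactly $t+2$), so $\sizeof{V_{[2,t+1]}(G')} \leq \sizeof{V_{[2,t+1]}(G)} + 2$. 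Substituting these bounds into the displayed inequality gives $-3 + 2 \geq 0$, a contradiction. Therefore $(G', H)$ is not a counterexample, and the edge-precolouring extends to $G - e$ as claimed.

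There is essentially no obstacle here; the only thing that requires care is confirming that no hypothesis of Theorem~\ref{main} is lost when passing from $G$ to $G - e$ — in particular that $H$ remains a subgraph, which is exactly why the claim is restricted to $e \notin E(H)$. The reason the argument goes through is the design of the potential function $3\sizeof{E(G)} + \sizeof{V_{[2,t+1]}(G)}$: a single edge deletion buys a decrease of $3$ from the edge term, which strictly outweighs the at-most-$2$ increase possible in the $V_{[2,t+1]}$ term, forcing the smaller pair out of the (minimal) counterexample class. (The $V_{[2,t+1]}$ term will matter for later reductions that contract or suppress low-degree vertices; for this claim the crude bound suffices.)
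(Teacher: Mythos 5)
Your proposal is correct and is essentially identical to the paper's own proof: both delete the edge, observe that all hypotheses of Theorem~\ref{main} persist for $(G-e, H)$ with the same $\Delta, t, d$, and note that the potential $3\sizeof{E(\cdot)} + \sizeof{V_{[2,t+1]}(\cdot)}$ strictly decreases because the edge term drops by $3$ while the vertex term rises by at most $2$. The contrapositive phrasing you use is just a cosmetic rearrangement of the paper's direct appeal to minimality.
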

\begin{proofc}
  Let any $e \in E(G) \setminus E(H)$ be given, and let $G' =
  G-e$. Note that $(G', H)$ satisfies the hypotheses of the theorem
  with $\Delta, t, d$. Exactly two vertices in $G'$ have lower degrees
  than in $G$, so $\sizeof{V_{[2,t+1]}(G')}$ may be as large as
  $\sizeof{V_{[2,t+1]}(G)}+2$. However, since $G'$ has one edge less
  than $G$, we still get that
  \[3\sizeof{E(G')} + \sizeof{V_{[2,t+1]}(G')} < 3\sizeof{E(G)} + \sizeof{V_{[2,t+1]}(G)}.\]
  Hence, by our choice of counterexample, the edge-precolouring of $H$
  extends to an $L$-edge-colouring of $G'$.
\end{proofc}

\begin{claim}\label{degsum}
  If $uv \in E(G)\setminus E(H)$, then $\deg_G(u)+\deg_G(v) \geq \Delta+t+2$.
\end{claim}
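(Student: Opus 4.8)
The plan is a standard minimal-counterexample deletion argument, exploiting the slack between the list size $\Delta+t$ and the number of edges that a low-degree edge can see. Suppose for contradiction that there is an edge $uv \in E(G) \setminus E(H)$ with $\deg_G(u) + \deg_G(v) \leq \Delta+t+1$. By Claim~\ref{Gminuse}, the edge-precolouring of $H$ extends to an $L$-edge-colouring $\varphi$ of $G - uv$; since $uv \notin E(H)$, this $\varphi$ in particular agrees with the precolouring on all of $H$.

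Next I would count the edges of $G-uv$ adjacent to $uv$: there are $\deg_{G-uv}(u) + \deg_{G-uv}(v) = (\deg_G(u) - 1) + (\deg_G(v) - 1) = \deg_G(u) + \deg_G(v) - 2 \leq \Delta + t - 1$ of them, and $\varphi$ uses at most that many colours on these edges. Since $\sizeof{L(uv)} \geq \Delta+t > \Delta+t-1$, there is a colour $c \in L(uv)$ not used by $\varphi$ on any edge adjacent to $uv$. Extending $\varphi$ by setting the colour of $uv$ to be $c$ yields an $L$-edge-colouring of $G$ that extends the edge-precolouring of $H$, contradicting the assumption that $(G,H)$ is a counterexample. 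Hence no such edge exists, and $\deg_G(u)+\deg_G(v) \geq \Delta+t+2$ for all $uv \in E(G)\setminus E(H)$.

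There is no real obstacle here — the only point requiring (mild) care is that $uv \notin E(H)$ guarantees deleting $uv$ does not disturb the precoloured subgraph, so Claim~\ref{Gminuse} applies verbatim and the greedy extension at the end respects the precolouring. This claim will presumably be used repeatedly in the discharging phase to argue that every edge of $G-E(H)$ has an endpoint of degree at least roughly $(\Delta+t)/2$, i.e., to control the structure around low-degree vertices.
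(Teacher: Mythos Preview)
Your proof is correct and follows essentially the same argument as the paper's: apply Claim~\ref{Gminuse} to colour $G-uv$, count the at most $\deg_G(u)+\deg_G(v)-2$ colours seen by $uv$, and greedily extend using a leftover colour from $L(uv)$.
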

\begin{proofc} By Claim \ref{Gminuse}, the edge-precolouring of $H$
  can be extended to an $L$-edge-colouring $\varphi$ of $G-uv$. The
  edge $uv$ sees at most $\deg_G(u)+\deg_G(v)-2$ different colours in
  $\varphi$, so since $(G, H, t)$ is a counterexample, it must be that
  $\deg_G(u)+\deg_G(v)-2 \geq \Delta+t$.
\end{proofc}
\begin{claim}\label{allinH} If $v \in V_{[1,t+1]}$, then every edge incident to $v$
  in $G$ is also in $H$.
\end{claim}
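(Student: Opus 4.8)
The plan is to derive this immediately from Claim~\ref{degsum} together with the standing hypothesis that $G$ has maximum degree at most $\Delta$. First I would argue by contradiction: suppose some vertex $v \in V_{[1,t+1]}$ has an incident edge $uv$ with $uv \in E(G)\setminus E(H)$. Then Claim~\ref{degsum} applies to $uv$ and gives
\[ \deg_G(u) + \deg_G(v) \geq \Delta + t + 2. \]
Since $v \in V_{[1,t+1]}$ means $\deg_G(v) \leq t+1$, rearranging yields
\[ \deg_G(u) \geq \Delta + t + 2 - \deg_G(v) \geq \Delta + t + 2 - (t+1) = \Delta + 1, \]
contradicting $\Delta(G) \leq \Delta$. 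Hence no such uncoloured edge can exist, i.e.\ every edge incident to $v$ lies in $H$.

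I do not expect any real obstacle here: the claim is essentially a bookkeeping consequence of Claim~\ref{degsum}, which already forbids an uncoloured edge both of whose endpoints have ``smallish'' degree — and a vertex in $V_{[1,t+1]}$ is extreme enough in that respect that its partner across an uncoloured edge would be forced to exceed $\Delta$. The only point to check is that the argument is uniform over the whole range $1 \leq \deg_G(v) \leq t+1$, including the boundary case $\deg_G(v)=1$; there the bound becomes $\deg_G(u) \geq \Delta+t+1 > \Delta$, so the contradiction still holds. This claim will presumably be used later to control the structure around low-degree vertices in the discharging argument (all their incident edges being precoloured pins down their local colouring behaviour), but the proof of the claim itself requires nothing beyond the two preceding claims.
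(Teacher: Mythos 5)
Your proof is correct and is essentially identical to the paper's own argument: both assume an uncoloured edge $uv$ at a vertex $v$ with $\deg_G(v)\leq t+1$, invoke Claim~\ref{degsum}, and conclude $\deg_G(u)\geq\Delta+1$, contradicting the maximum degree bound. No issues.
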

\begin{proofc}
  Assume for contradiction that $v \in V_{[1, t+1]}$ and $v$ is incident to an edge
  not in $H$, say $uv$. By Claim~\ref{degsum}, we know that $\deg_G(u) + \deg_G(v) \geq \Delta + t + 2$.
  However, since $\deg_G(v) \leq t+1$, this implies that $\deg_G(u) \geq \Delta+1$, a contradiction.
\end{proofc}
\begin{claim}\label{2tplus1empty}
  $V_{[2,t+1]}=\emptyset$.
\end{claim}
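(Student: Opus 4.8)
The plan is to assume for contradiction that some vertex $v$ lies in $V_{[2,t+1]}$, and to reach a contradiction by \emph{splitting} $v$. By Claim~\ref{allinH}, every edge at $v$ belongs to $H$; let $u_1,\dots,u_k$ be its neighbours (so $2\le k\le t+1$), and let $c_i$ be the colour of the precoloured edge $vu_i$ --- these are pairwise distinct as the edges $vu_i$ meet at $v$. (If $d=0$ there is nothing to prove, since then $H$ is edgeless and no vertex of degree at least $2$ can have all its edges in $H$; so assume $d\ge 1$.)

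Form $G''$ from $G$ by deleting $v$ and, for each $i$, adding a new vertex $v_i$ joined to $u_i$, with $L(v_iu_i):=L(vu_i)$; and let $H''$ consist of $H-v$ together with the edges $v_iu_i$, each precoloured $c_i$. The point of this operation is that it trades the single vertex $v\in V_{[2,t+1]}$ for $k$ new vertices of degree $1$ \emph{without changing the number of edges}, so
\[ 3\sizeof{E(G'')}+\sizeof{V_{[2,t+1]}(G'')} = 3\sizeof{E(G)}+\sizeof{V_{[2,t+1]}(G)}-1. \]
One checks routinely that $(G'',H'')$ still satisfies the hypotheses of Theorem~\ref{main} with the same $\Delta,t,d$: $G''$ is simple and planar, its maximum degree is at most $\Delta$ (each $u_i$ keeps its degree and the $v_i$ have degree $1$), the lists still have size at least $\Delta+t$, and $H''$ has maximum degree at most $d$ (again $\deg_{H''}(u_i)=\deg_H(u_i)$, while $\deg_{H''}(v_i)=1\le d$), with $H''$ properly $L$-edge-coloured since in $H$ the edge $vu_i$ was adjacent to exactly the edges of $H-v$ at $u_i$. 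Hence, by minimality of $(G,H)$, the precolouring of $H''$ extends to an $L$-edge-colouring $\psi$ of $G''$.

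To finish, I would identify $v_1,\dots,v_k$ back into $v$, colouring each $vu_i$ with $\psi(v_iu_i)=c_i$ and every other edge as under $\psi$. This is a proper $L$-edge-colouring of $G$ extending the precolouring of $H$: edges avoiding $v$ are coloured exactly as in $\psi$; the edges at $v$ receive the distinct colours $c_1,\dots,c_k$; and for any edge $e\ne vu_i$ at $u_i$ we have $\psi(e)\ne\psi(v_iu_i)=c_i$ because $\psi$ is proper at $u_i$. This contradicts the choice of $(G,H)$ as a counterexample.

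The only subtlety is the accounting in the second paragraph: the potential $3\sizeof{E}+\sizeof{V_{[2,t+1]}}$ was evidently designed precisely so that this split counts as a strict improvement, whereas the more obvious move of simply deleting $v$ does not work --- an $L$-edge-colouring of $G-v$ extending $H-v$ need not assign the colours $c_i$ to the edges that would reattach to $v$, and there is no evident way to repair it (single-edge or Kempe-chain recolouring arguments need $t$ close to $\Delta$, or risk disturbing the precoloured edges of $H$). The vertex split is exactly the device that encodes the constraints ``$c_i$ forbidden at $u_i$'' inside a bona fide smaller instance.
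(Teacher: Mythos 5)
Your proof is correct and is essentially identical to the paper's own argument: both split the vertex $v\in V_{[2,t+1]}$ (all of whose incident edges lie in $H$ by Claim~\ref{allinH}) into pendant copies carrying the same precoloured colours, observe that the potential $3\sizeof{E}+\sizeof{V_{[2,t+1]}}$ strictly decreases, and transfer the extension back. Your closing remarks on why the potential is designed this way are a nice bonus but not needed for correctness.
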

\begin{proofc}
  Suppose not, and take $v \in V_{[2,t+1]}$. By Claim~\ref{allinH},
  every edge $uv$ incident to $v$ must lie in $H$.

  Let $G'$ and $H'$ be the graphs obtained from $G$ and $H$,
  respectively, by deleting $v$ and, for each $u \in N_G(v)$, adding a
  new vertex $v_u$ adjacent only to $u$. We precolour each edge $uv_u$
  with the same colour received by the edge $uv$ in the precolouring of $H$. See Figure
  \ref{reduction}. Observe that the edge-precolouring of $H'$ extends to $G'$ if and only if
  the edge-precolouring of $H$ extends to $G$.

  \begin{figure}
    \centering
    \includegraphics[height=3.5cm]{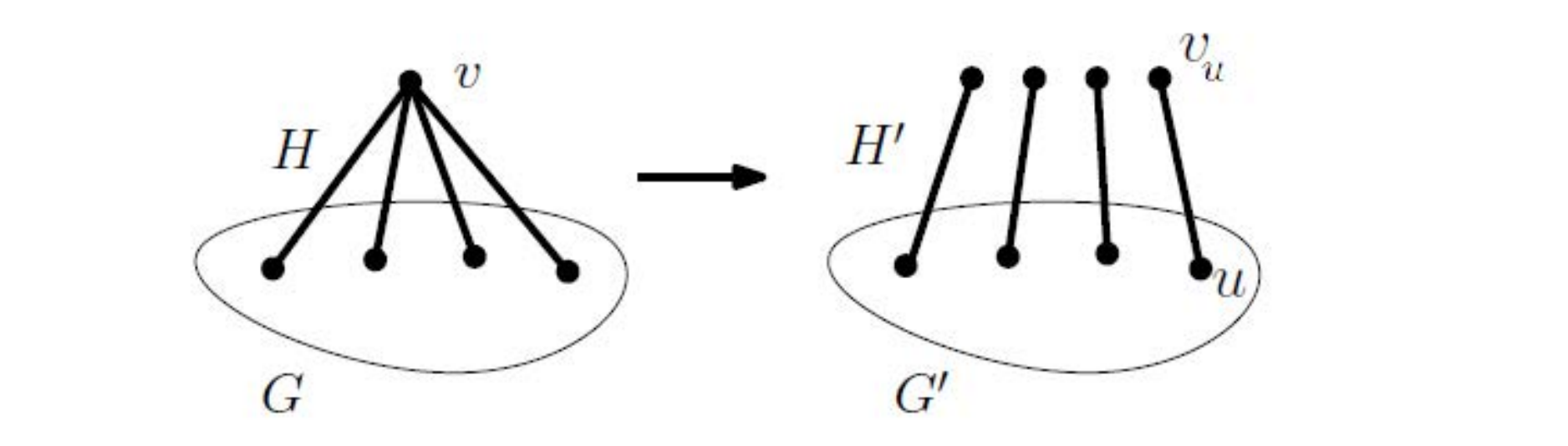}
    \caption{Moving from $(G,H)$ to $(G', H')$ in the proof of Claim \ref{2tplus1empty}.}
    \label{reduction}
  \end{figure}

  Now $G'$ has the same number of edges as $G$, and has one fewer
  vertex in $V_{[2,t+1]}$. As $\Delta(G') \leq \Delta$ and
  $\Delta(H') \leq d$, our choice of counterexample implies that
  the edge-precolouring of $H'$ extends to $G'$, but this means that
  the edge-precolouring of $H$ extends to $G$ as well.
\end{proofc}

\begin{claim}\label{verts} Every vertex of $G$ is either a leaf
  incident to an edge in $H$, or of degree at least $t+2$.
\end{claim}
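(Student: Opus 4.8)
The plan is to read off the claim from the two immediately preceding claims, with one small piece of bookkeeping for isolated vertices. By Claim~\ref{2tplus1empty} we know $V_{[2,t+1]}=\emptyset$, and since $t\geq 1$ this means every vertex of $G$ has degree $0$, $1$, or at least $t+2$. So the only possibilities that do not already match the statement are a vertex of degree $0$, or a vertex of degree $1$ whose incident edge is not in $H$; I would dispose of each in turn.

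For a vertex $v$ of degree $1$: since $1\in[1,t+1]$, we have $v\in V_{[1,t+1]}$, so Claim~\ref{allinH} applies and tells us that the unique edge incident to $v$ lies in $H$. Hence $v$ is a leaf incident to an edge of $H$, exactly as the claim asserts. For a vertex $v$ of degree $0$: I would argue that no such vertex exists, using the minimality of the counterexample. Deleting an isolated vertex $v$ from both $G$ and $H$ yields a pair $(G-v, H-v)$ that still satisfies the hypotheses of Theorem~\ref{main} with the same $\Delta,t,d$, and an $L$-edge-colouring of $G$ extending the precolouring of $H$ is literally the same thing as one of $G-v$ extending the precolouring of $H-v$; so $(G-v,H-v)$ is again a counterexample. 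Since $v$ contributes nothing to $E(G)$ and is not in $V_{[2,t+1]}(G)$, the quantity $3\sizeof{E(G)}+\sizeof{V_{[2,t+1]}(G)}$ is unchanged, while the vertex count strictly drops. Refining the choice of counterexample so that, among those minimizing $3\sizeof{E(G)}+\sizeof{V_{[2,t+1]}(G)}$, it also has the fewest vertices, we conclude $G$ has no isolated vertex.

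There is essentially no obstacle here: the content is entirely in Claims~\ref{2tplus1empty} and~\ref{allinH}, and the only point needing a moment of care is ruling out degree-$0$ vertices, which is handled by the minor strengthening of the extremal choice described above. (If one prefers not to touch the statement of the extremal choice, the same conclusion follows by first noting that a minimum counterexample is connected, since a component that is itself a counterexample has strictly fewer edges, and then observing that the remaining degenerate case is precisely a single isolated vertex plus no edges, which is not a counterexample.)
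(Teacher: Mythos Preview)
Your proof is correct and follows the paper's approach, which simply says the claim ``follows by combining Claim~\ref{allinH} and Claim~\ref{2tplus1empty}.'' You are in fact more careful than the paper: the paper's one-line justification does not address degree-$0$ vertices at all, whereas you explicitly rule them out via a secondary minimality (or connectedness) argument---a point the paper silently elides even though an isolated vertex would carry final charge $-6$ in the later discharging.
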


\begin{proof} This follows by combining  Claim \ref{allinH} and Claim~\ref{2tplus1empty}.
\end{proof}

Let $F_m$ be the set of faces in $G$ with exactly $m$ vertices on its boundary having degree 3 or higher in $G$.

\begin{claim}\label{F3}
  $F_0 = F_1 = F_2 = \emptyset$.
\end{claim}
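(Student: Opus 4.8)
The plan is to show directly that every face of $G$ has at least three vertices of degree at least $3$ on its boundary, which is precisely the assertion $F_0=F_1=F_2=\emptyset$. By Claim~\ref{verts} every vertex of $G$ is either a leaf incident to an edge of $H$ or of degree at least $t+2$; in particular a vertex of $G$ has degree $\geq 3$ exactly when it has degree $\geq t+2$, and I will call such vertices \emph{big}. A short check using the minimality of $(G,H)$ together with Claims~\ref{allinH} and \ref{verts} shows that $G$ is connected (it has no isolated vertices by Claim~\ref{verts}, so a disconnected $G$ would contain a component forming a strictly smaller counterexample), has at least one edge, and is not $K_2$; hence $G$ has at least one big vertex.

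The substantive point is that \emph{every big vertex $w$ has at least two big neighbours}. Indeed $\deg_G(w)\geq t+2$, and every leaf neighbour of $w$ is joined to $w$ by an edge of $H$ by Claim~\ref{allinH}, so $w$ has at most $\deg_H(w)\leq d$ leaf neighbours; since $d\leq t$, at least $(t+2)-d\geq 2$ of the neighbours of $w$ are big.

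Now let $G'=G-V_1(G)$ be the plane graph obtained from $G$ by deleting all of its leaves. Then $V(G')$ is exactly the set of big vertices of $G$ by Claim~\ref{verts}, $G'$ is connected and nonempty, and $G'$ has minimum degree at least $2$ by the previous paragraph. I would then invoke the standard fact that in a connected simple plane graph of minimum degree at least $2$ every face is bounded by at least three vertices: since $G'$ is simple and connected, a face bounded by at most two vertices would have a single bridge $uv$ as its only incident edge, with boundary walk $u,v,u$, which forces $\deg_{G'}(u)=\deg_{G'}(v)=1$. Finally, deleting a leaf $v$ (with pendant edge $vw$) merely erases the spur $w,v,w$ from the boundary walk of the unique face incident to $v$ and leaves all other faces unchanged; hence the faces of $G$ correspond bijectively to those of $G'$, and for each face $f$ of $G$ the set of big vertices on the boundary of $f$ equals the vertex set of the boundary of the corresponding face of $G'$, which has size at least $3$. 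Therefore every face of $G$ carries at least three big vertices on its boundary, and $F_0=F_1=F_2=\emptyset$.

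The step I expect to require the most care is the last: making precise the correspondence between the faces of $G$ and of $G'$ and checking it preserves the set of big boundary vertices, together with disposing of the degenerate low-order cases in the standard fact above. These are routine plane-graph manipulations; the one genuinely new ingredient, the existence of two big neighbours, follows immediately from the earlier claims.
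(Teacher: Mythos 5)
Your proof is correct, but it takes a genuinely different route from the paper's. The paper argues by contradiction on a face $f \in F_0 \cup F_1 \cup F_2$: since $V_2 = \emptyset$ (Claim~\ref{verts} with $t \geq 1$), any cycle in the boundary of $f$ already supplies three boundary vertices of degree at least $3$, so the boundary of $f$ must be acyclic, which forces $G$ to be a forest; the forest case is then disposed of by the bipartite extension result, Theorem~\ref{thm:bipextend} (this is where the hypothesis $d \leq t$ enters). You instead prove the structural fact that every vertex of degree at least $t+2$ has at least two neighbours of degree at least $t+2$ (using Claim~\ref{allinH} to bound the number of leaf neighbours by $\deg_H \leq d \leq t$), so that $G - V_1$ is a plane graph of minimum degree at least $2$; combined with connectivity of the minimal counterexample (which the paper never needs to establish) and the face correspondence under leaf deletion, this places three big vertices on every face boundary directly. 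Your route avoids Theorem~\ref{thm:bipextend} entirely --- the two-big-neighbours observation simultaneously rules out the forest case, since the subgraph induced by the big vertices has minimum degree $2$ and hence contains a cycle --- at the cost of extra plane-graph bookkeeping (connectivity via minimality, the fact that a simple connected plane graph of minimum degree $2$ has at least three vertices on each face boundary, and the preservation of big boundary vertices under leaf deletion), all of which you handle correctly. Both arguments use $d \leq t$ in an essential way, just in different places.
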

\begin{proofc} Suppose that $f\in F_0\cup F_1\cup F_2$; we will show a
  contradiction. We know that $V_2=\emptyset$ by Claim \ref{verts},
  since $t\geq 1$. So, if the boundary of $f$ contains a cycle,
  then it contains at least three vertices of degree at least three,
  yielding a contradiction. Thus, the boundary of $f$ contains
  no cycle. This means that $G$ is a forest, and $f$ is its one
  face. In particular, $G$ is bipartite. By Theorem~\ref{thm:bipextend},
  this implies that the precolouring of $H$ extends to all of $G$,
  contradicting our choice of $G$ as a counterexample.
\end{proofc}

We now introduce a discharging argument. To each vertex in $G$ assign
an initial charge of $\alpha(v)=3\deg_G(v)-6$. To each face in $G$
assign an initial charge of $\alpha(f)=-6$. We also define an
additional structure $P$ (a ``global pot'') and assign to it an
initial charge of $\alpha(P)=0$. We discharge along the following
rules:
\begin{enumerate}
\item[(a)]  For each $m$, every face $f \in F_m$ takes $\frac{6}{m}$ from each
  vertex of degree 3 or higher on its boundary.
\item[(b)] Every vertex $v \in V_1$ takes $3$ from its neighbor.
\end{enumerate}
In the special case where $t=d+\ell$ for $\ell\in\{0, 1, 2, 3\}$, we also add the following rules:
\begin{enumerate}
\item[(c)] For every vertex $v \in V_i$, where $i\in\{t+2, \ldots, t+5-\ell\}$:\\ \hspace*{1.5in} $v$ takes $t+6-\ell-i$ from $P$.
\item[(d)] For every vertex $v\in V_j$, where $j\in\{\Delta-3+\ell, \ldots, \Delta\}$:\\ \hspace*{1.5in} $v$ gives $\frac{q(j)(q(j) + 1)}{2(\ell+1)}$ to $P$,
 where $q(j) = j-\Delta+4-\ell$.
\end{enumerate}

While it is not immediately obvious, discharging rules (c) and (d) never apply to the same vertex, due to the following claim.

\begin{claim} \label{disjoint} If $t=d+\ell$ for some $\ell\in\{0, 1, 2, 3\}$, then $\Delta-3+\ell>t+5-\ell$.
\end{claim}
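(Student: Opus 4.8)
The plan is to reduce this to the numerical hypotheses already imposed in the statement of Theorem~\ref{main}. First I would note that the assumption ``$t = d+\ell$ for some $\ell\in\{0,1,2,3\}$'' is merely a restatement of the condition $t-3\le d\le t$ appearing in Case~2 of Theorem~\ref{main}: if instead $d\le t-4$ then no such $\ell$ exists, so whenever the hypothesis of the claim is in force we are necessarily in Case~2 (equivalently, exactly in the ``special case'' in which discharging rules~(c) and~(d) are added), and hence the corresponding lower bound on $\Delta$ from the theorem statement is available.

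Next I would rearrange the target inequality. Since every quantity involved is an integer, $\Delta-3+\ell > t+5-\ell$ is equivalent to $\Delta \ge t+9-2\ell$, and substituting $t=d+\ell$ this becomes $\Delta \ge d + 9 - \ell$. It then suffices to check, for each $\ell\in\{0,1,2,3\}$, that Case~2 of Theorem~\ref{main} guarantees at least this much: for $\ell=0$ (so $d=t$) we need $\Delta\ge d+9$ and are given $\Delta\ge d+16$; for $\ell=1$ (so $d=t-1$) we need $\Delta\ge d+8$ and are given $\Delta\ge d+9$; for $\ell=2$ (so $d=t-2$) we need $\Delta\ge d+7$ and are given $\Delta\ge d+8$; and for $\ell=3$ (so $d=t-3$) we need $\Delta\ge d+6$ and are given $\Delta\ge d+7$. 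In each case the hypothesis is at least as strong as what is required (indeed strictly stronger), which completes the argument.

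There is essentially no real obstacle here: the content is a short arithmetic verification over four cases. The only points needing a little care are (i) recording that the claim is invoked only inside the special case that installs rules~(c) and~(d), so that the Case~2 bounds on $\Delta$ legitimately apply, and (ii) invoking integrality when passing from the strict inequality $\Delta-3+\ell>t+5-\ell$ to the clean form $\Delta\ge t+9-2\ell$. The upshot of the claim, which is what will actually be used afterward, is that the vertex-degree interval $\{t+2,\dots,t+5-\ell\}$ governed by rule~(c) and the interval $\{\Delta-3+\ell,\dots,\Delta\}$ governed by rule~(d) are disjoint, so no vertex both sends charge to and receives charge from the global pot $P$.
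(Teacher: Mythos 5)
Your proof is correct and follows essentially the same route as the paper's: both reduce the claim to the Case~2 lower bounds on $\Delta$ from Theorem~\ref{main} and check the four values of $\ell$ by direct arithmetic (the paper handles $\ell\in\{1,2,3\}$ uniformly via $\Delta\ge 10+d-\ell$, while you check each case separately; your appeal to integrality is harmless but unnecessary, since the given bounds already exceed the required ones strictly).
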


\begin{proofc}  We get the desired inequality if and only if $\Delta+2\ell > 8+t$. If $\ell = 0$,
  then we have $d=t$, so the hypothesis of Theorem~\ref{main} yields
  \[ \Delta+2\ell=\Delta\geq 16+d=16+t>8+t. \]
 If $\ell \in \{1,2,3\}$ we may rewrite hypothesis of Theorem~\ref{main} as 
    \[\Delta \geq 10+d-\ell=10+(t-\ell)-\ell=10+t-2\ell, \textrm{\ so}\]
    \[ \Delta+2\ell \geq 10+t> 8+t. \]
\end{proofc}

Using Euler's formula for planar graphs, the sum of initial charges is at most $-12$:
\begin{align*}
  \alpha(P)+ \sum_{v \in V(G)} \alpha(v) + \sum_{f \in F(G)} \alpha(f) &=0+\sum_{v \in V(G)} (3\deg_G(v) -6) + \sum_{f \in F(G)} (-6)\\
                                                                       &= 6|E(G)| - 6|V(G)| - 6|F(G)| \leq 6 (-2)= -12.
\stepcounter{equation}\tag{\theequation}\label{eq:Euler}
\end{align*}
For each graph element $x$ (either a vertex, a face, or the global
pot), let $\alpha'(x)$ denote the final charge of $x$. Since each
discharging rule conserves the total charge, we see that
$\sum_x \alpha'(x) = \sum_x \alpha(x) = -12$.  We will achieve our
desired contradiction by showing that the final charge of each element
is nonnegative.

First consider a face $f$. By Claim \ref{F3}, $f\in F_m$ for
$m\geq 3$. So according to discharging rule (a) (the only rule
affecting $f$),
\[\alpha'(f)=(-6)+ m (\tfrac{6}{m})=0.\]

Now consider the global pot $P$. We know $\alpha(P)=0$ and that the charge of $P$ is unaffected when $d \leq t-4$, so the following claim precisely amounts to showing showing that $\alpha'(P)> 0$ when $t-3\leq d \leq t$.

\begin{claim}\label{coef} If $t=d+\ell$ for some $\ell\in\{0,1,2,3\}$, then
  \begin{equation}\label{coefs}\sum_{i=t+2}^{t+5-\ell} (t+6-\ell-i)|V_i|< \sum_{j=\Delta-3+\ell}^{\Delta}\frac{q(j)(q(j)+1)}{2(\ell+1)} |V_j|.\end{equation}
\end{claim}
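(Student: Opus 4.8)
The plan is to prove~(\ref{coefs}) by applying Lemma~\ref{cyclefix} once for each $r\in\{1,\dots,4-\ell\}$ and then adding the resulting $4-\ell$ inequalities. Observe first that $(G,H)$ does satisfy the hypothesis of Lemma~\ref{cyclefix} — the edge‑precolouring of $H$ extends to $G-e$ for every $e\in E(G)\setminus E(H)$ by Claim~\ref{Gminuse}, but not to $G$ since $(G,H)$ is a counterexample. For a fixed $r$, I would invoke the lemma with
\[
  a_0 = t+2,\qquad a = t+6-\ell-r,\qquad b_0 = \Delta-4+\ell+r,
\]
that is, with $A = V_{[t+2,\;t+6-\ell-r]}$ and $B = V_{[\Delta-4+\ell+r,\;\Delta]}$. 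Note $a\ge a_0$ exactly because $r\le 4-\ell$.

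Next I would verify the hypotheses of Lemma~\ref{cyclefix}. We have $a_0 = t+2\ge t+1$ and $a+b_0 = \Delta+t+2\ge\Delta+t+1$, and both of these are \emph{strict}, so the lemma will give its strict conclusion. The condition $b_0>a$ reads $\Delta-4+\ell+r > t+6-\ell-r$; for $r=1$ this is precisely the inequality $\Delta-3+\ell > t+5-\ell$ established in Claim~\ref{disjoint} (which is where the degree bounds of Theorem~\ref{main} enter), and for larger $r$ it only becomes easier since $a$ decreases while $b_0$ increases. In particular $A$ and $B$ are disjoint, so $X$ is a genuine bipartite subgraph. Finally, the hypothesis $\deg_X(u)\ge\deg_G(u)-d$ for every $u\in A$ follows from Claim~\ref{degsum}: if $uv\in E(G)\setminus E(H)$ with $\deg_G(u)\le a$, then $\deg_G(v)\ge \Delta+t+2-\deg_G(u)\ge\Delta+t+2-a = b_0$, so $v\in B$; hence every non‑$H$ edge at $u$ lies in $X$ and $\deg_X(u)=\deg_G(u)-\deg_H(u)\ge\deg_G(u)-d$.

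With the hypotheses checked, Lemma~\ref{cyclefix} (strict form), together with $t+1-d=\ell+1$ and the rewriting $a+i-1-(\Delta+t)=q(i)+1-r$, gives for each $r\in\{1,\dots,4-\ell\}$
\[
  (\ell+1)\,\bigl|V_{[t+2,\;t+6-\ell-r]}\bigr| \;<\; \sum_{i=\Delta-4+\ell+r}^{\Delta}\bigl(q(i)+1-r\bigr)\,|V_i|.
\]
Every coefficient $q(i)+1-r$ here is positive (throughout the sum $r\le q(i)$), so summing over $r$ and interchanging the order of summation causes no cancellation. On the right, for fixed $i$ with $\Delta-3+\ell\le i\le\Delta$ the index $r$ runs over $1\le r\le q(i)$, and $\sum_{r=1}^{q(i)}(q(i)+1-r)=\tfrac12 q(i)(q(i)+1)$; on the left, interchanging summation gives $\sum_{r=1}^{4-\ell}\bigl|V_{[t+2,\;t+6-\ell-r]}\bigr| = \sum_{i=t+2}^{t+5-\ell}(t+6-\ell-i)|V_i|$. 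Hence the sum of the displayed inequalities is exactly $(\ell+1)$ times~(\ref{coefs}), and dividing by $\ell+1>0$ completes the proof.

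The conceptual content is just the decision to iterate Lemma~\ref{cyclefix} with a sliding window of parameters; the real work — and the part I expect to be the main obstacle — is pinning down the window endpoints $a(r)$ and $b_0(r)$ so that simultaneously (i) the lemma's numerical hypotheses hold under exactly the degree bounds assumed in Theorem~\ref{main}, (ii) no coefficient $q(i)+1-r$ on the right is negative, so nothing cancels on summing, and (iii) the two double sums reindex cleanly into the two sides of~(\ref{coefs}). The most delicate of these is item (i) combined with the $\deg_X$ check via Claim~\ref{degsum} across the whole degree range occupied by $A$: this is what forces $b_0$ to be as small as the constraint $a+b_0\ge\Delta+t+2$ allows, and correspondingly forces $a$ to be capped at $t+5-\ell$.
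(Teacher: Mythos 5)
Your proof is correct and takes essentially the same approach as the paper: under the reindexing $r=k+1$ your parameters $a=t+6-\ell-r$ and $b_0=\Delta-4+\ell+r$ are exactly the paper's $a=t+5-\ell-k$ and $b_0=\Delta-3+\ell+k$, and every subsequent step (the hypothesis checks via Claims~\ref{Gminuse}, \ref{degsum} and \ref{disjoint}, the strict form of Lemma~\ref{cyclefix}, and the interchange of the double sum yielding the triangular coefficients $q(j)(q(j)+1)/2$) coincides with the paper's argument.
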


\begin{proofc}
  For each $k\in\{0, \ldots, 3-\ell\}$, define
  $A_k=V_{[t+2, t+5-\ell-k]}$ and $B_k=V_{[\Delta-3+\ell+k, \Delta]}$
  and let $X_k$ be the bipartite subgraph of $G-E(H)$ induced by the
  partition $(A_k, B_k)$. We will show we can apply Lemma \ref{cyclefix}
  for each value of $k$, and then we will sum the resulting inequalities
  to get our desired result. For fixed $k$, this means we want to
  apply Lemma \ref{cyclefix} with parameter choices
  \begin{align*}
    a_0 &= t+2, & a &= t+5-\ell-k, \\
    b_0 &= \Delta-3+\ell+k,
  \end{align*}

 \noindent and hence to do so we must verify that $a_0\geq t+1$ (true) and that
  $a+b_0\geq \Delta+t+1$, which is true since
  \[(t+5-\ell-k)+(\Delta-3+\ell + k)=t+2+\Delta.\]

  In fact, since both these inequalities hold strictly, we will apply the strict version of Lemma \ref{cyclefix}. Of course, there are several other hypotheses we must check. In particular, we must verify that $b_0 > a$, which is equivalent to showing
    that $\Delta > t + 8 - 2\ell$. Since $t = d+\ell$, we get this inequality by Claim \ref{disjoint}.
  By Claim \ref{Gminuse}, we can therefore apply Lemma \ref{cyclefix} for $k$ provided that every vertex $u\in A_k$ has the property that
  $$\deg_{X_k}(u) \geq \deg_G(u)-d.$$
  Consider such a vertex $u$ with incident edge $uv$ in $E(G)\setminus E(H)$. Since $u\in A_k$, and by Claim \ref{degsum}, we know that
  $$\deg_G(v)\geq \Delta+t+2-\deg_G(u)\geq \Delta+t+2-(t+5-\ell-k)=\Delta-3+\ell+k.$$
  This means, by definition of $X_k$, that the edge $uv$ is in $X_k$.  So we get $deg_{X_k}(u)\geq deg_G(u)-deg_H(u) \geq deg_G(u)-d$, as desired.

  For any fixed $k$, we can now apply Lemma \ref{cyclefix} to get
  \begin{equation}\label{eq:fixedcycle} (\ell+1) |A_k| < \sum_{j=\Delta-3+\ell+k}^{\Delta} (q(j)-k)|V_j|,
  \end{equation}
  since $t+1-d = \ell+1$ by the hypothesis of Claim~8, and since, for our choices of parameters,
  \begin{align*}
    a+j-1 - (\Delta+t) &= (t+5-\ell-k)+j - 1 - (\Delta + t) \\
                       &= j - \Delta + 4 - \ell - k \\
                       &= q(j) - k.
  \end{align*}
  Dividing \eqref{eq:fixedcycle} by $(\ell+1)$ and summing over all $k$ yields
  \begin{equation}\label{bigsum}
    \sum_{k=0}^{3-\ell}|A_k| < \left(\tfrac{1}{\ell+1}\right)\sum_{k=0}^{3-\ell}\sum_{j=\Delta-3+\ell+k}^{\Delta} (q(j)-k)|V_j|.
  \end{equation}
  The left-hand-side of (\ref{bigsum}) is
  \begin{eqnarray*}\sum_{k=0}^{3-\ell}|V_{[t+2, t+5-\ell-k]}| &=& |V_{[t+2, t+5-\ell]}|+|V_{[t+2, t+4-\ell]}|+\cdots+|V_{[t+2, t+2]} |\\
                                                              &=& (4-\ell)|V_{t+2}|+\cdots +2|V_{t+4-\ell}|+ |V_{t+5-\ell}|\\
                                                              &=& \sum_{i=t+2}^{t+5-\ell}(t+6-\ell-i)|V_i|,
  \end{eqnarray*}
  matching the left-hand side of (\ref{coefs}). It remains only to show that the right-hand-side of (\ref{bigsum}) equals the right-hand side of (\ref{coefs}). To this end, note that
  \[
  j\geq \Delta-3+\ell+k \iff k\leq j-\Delta+3-\ell=q(j)-1, \textrm{and so}
  \]
  \[
    \sum_{k=0}^{3-\ell}\sum_{j=\Delta-3+\ell+k}^{\Delta} (q(j)-k)|V_j| = \sum_{j=\Delta-3+\ell}^{\Delta} \left(\sum_{k=0}^{q(j)-1}(q(j)-k)\right)|V_j|.
  \]
  Now the bracketed sum can be rewritten as
  \[ \sum_{k=0}^{q(j)-1} (q(j)-k) =q(j)+(q(j)-1)+(q(j)-2)+\cdots +1= \tfrac{q(j)(q(j)+1)}{2}, \]
  which is precisely what we needed to prove.
\end{proofc}

We have now shown $\alpha'(P)>0$, so it remains only to consider the final charge of an arbitrary vertex $v$. If $v\in V_1$, then only discharging rule (b) affects $v$, and we get
$$\alpha'(v)=(-3)+3=0.$$
By Claim \ref{verts}, we may now assume that $\deg_G(v)\geq t+2$.

Suppose $v$ lies on the boundary of $x$ distinct faces and is incident to $y$ leaves. We know that $x$ is no more than  $\deg_G(v)-y$,  so $x+y \leq \deg_G(v)$. We also know that $y \leq d$, by Claim \ref{verts} and by definition of $d$. By doubling the first inequality and adding the result to the second inequality we get
\begin{equation}\label{2x} 2x+3y \leq 2\deg_G(v)+d.
\end{equation}
Since $F_0,F_1,F_2 = \emptyset$ by Claim \ref{F3}, each of the $x$ distinct faces incident to $v$ has at least $3$ vertices of degree at least 3 on their boundary. This means that each of these $x$ faces takes charge at most  2 from $v$, according to discharging rule (a). Each of the $y$ leaves incident to $v$ takes exactly 3 from $v$, according to discharging rule (b). Hence by inequality (\ref{2x}), after applying discharging rules (a) and (b) (but before considering discharging rules (c) or (d)), the charge of $v$ is at least
\begin{equation}\label{ab}
  3\deg_G(v)-6 -(2x+3y) \geq \deg_G(v)-6-d.
\end{equation}
Note that since $d\leq t$, the additional discharging rules (c) and (d) are applied precisely when $d \geq t-3$. If $d\leq t-4$, then we do not apply them, and by inequality (\ref{ab}),
$$\alpha'(v)\geq \deg_G(v)-6-d \geq \deg_G(v)-6- (t-4)=\deg_G(v)-(t+2)\geq 0.$$

We may now assume that $t=d+\ell$ for $\ell \in\{0,1,2,3\}$. Let $p$ denote the total charge transferred from $P$ to $v$ according to discharging rules (c) and (d); note that $p$ may be positive, negative, or zero.
In all cases, by inequality (\ref{ab}), we have that
\begin{equation}\label{finalp}
  \alpha'(v) \geq \deg_G(v)-6-d +p.
\end{equation}

If neither discharging rule (c) nor (d) applies to $v$, then we know that $t+5-\ell < \deg_G(v)$ and therefore (\ref{finalp}) says that
$$\alpha'(v) \geq (t+5-\ell+1)-6-d +(0)=(t-d)-\ell=0,$$
as desired.

Now suppose that discharging rule (c) applies to $v$ (and hence (d) does not, according to Claim \ref{disjoint}). In this situation, \eqref{finalp} implies that
\[\alpha'(v) \geq \deg_G(v)-6-d +(t+6-\ell-\deg_G(v))=0.\]

Finally, we may assume that discharging rule (d) applies to $v$ (and
hence (c) does not, according to Claim \ref{disjoint}). In this case,
we have $t = d+\ell$, where $\ell \in \{0,1,2,3\}$, andF
$\deg_G(v) \in \{\Delta-3+\ell, \ldots, \Delta\}$. By~\eqref{finalp},
%a lower bound on the final charge $\alpha'(v)$ is given by
\[ \alpha'(v) \geq \deg_G(v) - 6 - d - \left(\frac{ (\deg_G(v) - (\Delta - 4 + \ell))(\deg_G(v) - (\Delta - 5 + \ell))}{2(\ell+1)} \right). \]
Writing $\deg_G(v)$ as $\Delta - h + \ell$, where $h \in \{\ell,\ldots,3\}$, we can rewrite this lower bound as
\begin{align*}
  \alpha'(v) &\geq \Delta - h + \ell - 6 - d - \left( \frac{(4-h)(5-h)}{2(\ell+1)}\right) \\
             &= \Delta - d - \left(6 + h - \ell + \frac{(4-h)(5-h)}{2(\ell+1)}\right).
\end{align*}
Table~\ref{tab:alphaprime} computes the bracketed quantity for each permissible combination of $\deg_G(v)$ and $\ell$.
For each possible value of $\ell$, the hypothesis of Theorem~\ref{main} ensures that this lower bound
  is always nonnegative.

\begin{table}
  \centering
  \[\begin{array}{r||c|c|c|c}
      & \ell=0 & \ell=1 & \ell=2 & \ell=3 \\\hline\hline
      \deg_G(v) = \Delta-3+\ell & \Delta - d - 10 & \Delta - d - \lfrac{17}{2} & \Delta - d - \lfrac{22}{3} & \Delta - d - \lfrac{25}{4}  \\\hline
      \deg_G(v) = \Delta-2+\ell & \Delta - d - 11 & \Delta - d - \lfrac{17}{2} & \Delta - d - 7 & *   \\\hline
      \deg_G(v) = \Delta-1+\ell & \Delta - d - 13 & \Delta - d - 9 & * & * \\ \hline
      \deg_G(v) = \Delta-0+\ell & \Delta - d - 16 & * & * & *
    \end{array}\]
  \caption{Lower bounds on $\alpha'(v)$ when discharging rule (d) applies. Starred entries are impossible due to $\deg_G(v) \leq \Delta$.}
  \label{tab:alphaprime}
\end{table}
We have proved that $\alpha'(x) \geq 0$ for every graph element $x$, and this completes the proof of Theorem~\ref{main}.

\section{Beyond planarity}

In the proof of Theorem \ref{main} our initial charges sum to at most $-12$, and after discharging the vertices, faces, and global pot all have nonnegative charge. In fact, when we examine inequality (\ref{eq:Euler}), we  see that the sum of initial charges is at most $-6\varepsilon$, where $\varepsilon$ is the Euler characteristic of the plane. Hence our argument works for any surface of positive Euler characteristic; namely $G$ may be embedded on the plane or projective plane. Moreover, this embedding requirement need not concern the edges of the precoloured $H$: imagine applying Theorem \ref{main} to the graph obtained by replacing every edge $e=uv$ in $H$ with a pair of edges $e_u=uu'$ and $e_v=vv'$ where $u', v'$ are new leaves, and $e_u$ and $e_v$ retain the precolouring (and lists) of $e$. Given these observations, we can strengthen Theorem \ref{main} by removing the assumption that ``$G$ is planar'' and replacing it by the somewhat milder ``$G-E(H)$ can be embedded in a surface of positive Euler characteristic''.

\section*{Acknowledgements}

We are indebted to an anonymous referee whose insightful comments strengthened our main result.\\

\bibliographystyle{amsplain}
\bibliography{bibEdgePrecolouringPlanar}

\providecommand{\bysame}{\leavevmode\hbox to3em{\hrulefill}\thinspace}
\providecommand{\MR}{\relax\ifhmode\unskip\space\fi MR }
% \MRhref is called by the amsart/book/proc definition of \MR.
\providecommand{\MRhref}[2]{%
  \href{http://www.ams.org/mathscinet-getitem?mr=#1}{#2}
}
\providecommand{\href}[2]{#2}
\begin{thebibliography}{10}

\bibitem{borodin}
O.~V. Borodin, \emph{A generalization of {K}otzig's theorem and prescribed edge
  coloring of planar graphs}, Mat. Zametki \textbf{48} (1990), no.~6, 22--28,
  160, (in Russian). \MR{1102617}

\bibitem{BKW}
O.~V. Borodin, A.~V. Kostochka, and D.~R. Woodall, \emph{List edge and list
  total colourings of multigraphs}, J. Combin. Theory Ser. B \textbf{71}
  (1997), no.~2, 184--204. \MR{1483474}

\bibitem{cohen-havet}
Nathann Cohen and Fr\'ed\'eric Havet, \emph{Planar graphs with maximum degree
  {$\Delta\geq9$} are {$(\Delta+1)$}-edge-choosable---a short proof}, Discrete
  Math. \textbf{310} (2010), no.~21, 3049--3051. \MR{2677668}

\bibitem{Col}
Charles~J. Colbourn, \emph{The complexity of completing partial {L}atin
  squares}, Discrete Appl. Math. \textbf{8} (1984), no.~1, 25--30. \MR{739595}

\bibitem{EGVKPS}
Katherine Edwards, Ant{\' o}nio Gir{\~ a}o, Jan van~den Heuvel, Ross~J. Kang,
  Gregory~J. Puleo, and Jean-S{\' e}bastien Sereni, \emph{Extension from
  precoloured sets of edges}, 2016. arXiv:1407.4339.

\bibitem{girao-kang}
Ant{\'o}nio Gir{\~a}o and Ross~J. Kang, \emph{Precolouring extension of
  vizing's theorem}, 2016.

\bibitem{Ho}
Ian Holyer, \emph{The {NP}-completeness of edge-coloring}, SIAM J. Comput.
  \textbf{10} (1981), no.~4, 718--720. \MR{635430}

\bibitem{toft-jensen}
Tommy~R Jensen and Bjarne Toft, \emph{Graph coloring problems}, vol.~39, John
  Wiley \& Sons, 2011.

\bibitem{MS}
O.~Marcotte and P.~D. Seymour, \emph{Extending an edge-coloring}, J. Graph
  Theory \textbf{14} (1990), no.~5, 565--573. \MR{1073098}

\bibitem{Marx}
D\'aniel Marx, \emph{N{P}-completeness of list coloring and precoloring
  extension on the edges of planar graphs}, J. Graph Theory \textbf{49} (2005),
  no.~4, 313--324. \MR{2197234}

\bibitem{SZ}
Daniel~P. Sanders and Yue Zhao, \emph{Planar graphs of maximum degree seven are
  class {I}}, J. Combin. Theory Ser. B \textbf{83} (2001), no.~2, 201--212.
  \MR{1866396}

\bibitem{V1}
V.~G. Vizing, \emph{On an estimate of the chromatic class of a {$p$}-graph},
  Diskret. Analiz No. \textbf{3} (1964), 25--30. \MR{0180505}

\bibitem{Zh}
Limin Zhang, \emph{Every planar graph with maximum degree 7 is of class 1},
  Graphs Combin. \textbf{16} (2000), no.~4, 467--495. \MR{1804346}

\end{thebibliography}
\end{document}